 \def\ps@pprintTitle{%
 	\let\@oddhead\@empty
 	\let\@evenhead\@empty
 	\def\@oddfoot{\footnotesize\itshape
 		{} \hfill}%
 	\let\@evenfoot\@oddfoot
 }
\newtheorem{theor}{Theorem}[section]
\newtheorem*{theor*}{Theorem}
\newtheorem{prop}[theor]{Proposition}
\newtheorem{lemma}[theor]{Lemma}
\newtheorem{cor}[theor]{Corollary}
\newtheorem*{cor*}{Corollary}
\theoremstyle{definition}               
\newtheorem{defin}[theor]{Definition}
\newtheorem{ex}{Example}
\newtheorem{exs}[ex]{Examples}
\newtheorem{rem}[theor]{Remark}
\newtheorem{que}{Question}
\DeclareMathOperator{\Sym}{Sym}
\DeclareMathOperator{\Aut}{Aut}
\DeclareMathOperator{\End}{End}
\DeclareMathOperator{\id}{id}
\DeclareMathOperator{\Soc}{Soc}
\DeclareMathOperator{\Ann}{Ann}
\DeclareMathOperator{\E}{E}
\DeclareMathOperator{\Map}{Map}
\DeclareMathOperator{\Fix}{Fix}
\newcommand{\dr}[1]{\mathcal{D}_r({#1})}
\begin{document}

\begin{frontmatter}
	\title{{\bf 
Deformed solutions of the Yang-Baxter equation\\ associated to dual weak braces}}
   \author{Marzia~MAZZOTTA}
 \author{Bernard~RYBO{\L}OWICZ}
\author{Paola~STEFANELLI}
 	%

\begin{abstract}

A recent method for acquiring new solutions of the Yang-Baxter equation involves deforming the classical solution associated with a skew brace. In this work, we demonstrate the applicability of this method to a dual weak brace $\left(S,+,\circ\right)$ and prove that all elements generating deformed solutions belong precisely to the set $\mathcal{D}_r(S)=\{z \in S  \mid \forall  a,b \in S \, \, (a+b) \circ z = a\circ z-z+b \circ z\}$, which we term the \emph{distributor of $S$}. We show it is a full inverse subsemigroup of $\left(S, \circ\right)$ and prove it is an ideal for certain classes of braces.
Additionally, we express the distributor of a brace $S$ in terms of the associativity of the operation $\cdot$, with $\circ$ representing the circle or adjoint operation. In this context, $(\mathcal{D}_r(S),+,\cdot)$ constitutes a Jacobson radical ring contained within $S$.
Furthermore, we explore parameters leading to non-equivalent solutions, emphasizing that even deformed solutions by idempotents may not be equivalent.
Lastly, considering $S$ as a strong semilattice $[Y, B_\alpha, \phi_{\alpha,\beta}]$ of skew braces $B_\alpha$, we establish that a deformed solution forms a semilattice of solutions on each skew brace $B_\alpha$ if and only if the semilattice $Y$ is bounded by an element $1$ and the deforming element $z$ lies in $B_1$.

\end{abstract}
 \begin{keyword}
 Yang-Baxter equation \sep set-theoretic solution \sep inverse semigroup\sep Clifford semigroup \sep skew brace  \sep brace \sep weak brace 
 \MSC[2020] 16T25\sep 81R50 \sep 20M18\sep 16Y99 
 \end{keyword}

\end{frontmatter}

\section*{Introduction}

The history of the Yang-Baxter equation dates back to the 1960s when the equation first appeared and gained the attention of mathematical physicists. The name originates from two outstanding researchers and their papers: Yang and his paper on many-body problems \cite{Yang}, and Baxter and his paper on the eight-vertex lattice model \cite{Baxter}. Since then, different variants of the equation have been found and studied. For example, the classical Yang-Baxter equation, whose connections with simple Lie algebras were studied by Belavin and Drinfel’d in \cite{BeDr82}.  A variant which is of our interest in this work is called the set-theoretic Yang-Baxter equation. Drinfel'd attracted the attention of researchers to this version by including it in his paper \cite[p.~7]{Dr92} and presenting it in the form we use nowadays.
Given a set $S$, a map
$r:S\times S\to S\times S$ is said to be a \emph{set-theoretic solution of the Yang-Baxter equation}, or shortly  \emph{solution}, if it satisfies the identity
\begin{align*}
\left(r\times\id_S\right)
\left(\id_S\times r\right)
\left(r\times\id_S\right)
= 
\left(\id_S\times r\right)
\left(r\times\id_S\right)
\left(\id_S\times r\right).
\end{align*}
Writing $r\left(x,y\right) = \left(\lambda_x\left(y\right), \rho_y\left(x\right)\right)$, with $\lambda_x, \rho_x$ maps from $S$ into itself,  then $r$ is \emph{left (resp. right) non-degenerate} if $\lambda_x\in \Sym_S$ (resp. $\rho_x\in \Sym_S$),  for every $x\in S$,  \emph{non-degenerate} if it is both left and right non-degenerate. After the paper of Drinfel'd, many authors focused their attention on this equation discovering new connections between solutions and various algebraic structures. The early literature on the subject is rich, but to help new readers, we direct them to the papers by Etingof et al., e.g. \cite{ESS99, EtSoGu01, Et03}. In \cite{ESS99}, Etingof, Schedler, and Soloviev studied involutive non-degenerate solutions. Then, those ideas were further pursued by Lu, Yan, and Zhu in \cite{LuYZ00} with a focus on bijective non-degenerate solutions.

In \cite{Ru07}, Rump introduced the algebraic structure of braces to study involutive non-degenerate solutions.  It is worth noting that the axiomatics of the theory of two-sided braces were already studied before by Andrunakievi\v{c} \cite[p.~131]{And48} and Kurosh \cite[p.~69]{Ku74}. \footnote{This fact was unknown to the authors before an anonymous referee reported it.}. Later, skew braces \cite{GuVe17} were introduced by Guarnieri and Vendramin to study bijective non-degenerate solutions. In this context, Bardakov and Gubarev \cite{BaGu22} have proved that every skew brace can be injectively embedded into a Rota-Baxter group.


Recently in \cite{CaMaMiSt22}, the authors introduced weak braces to study not necessarily bijective solutions. We note that a similar approach of weakening the structure was already considered for the quantum Yang-Baxter equations by introducing weak Hopf algebras in \cite{Li98}).
A \emph{weak brace} is a triple $\left(S,+,\circ\right)$ such that $\left(S,+\right)$ and $\left(S,\circ\right)$ are inverse semigroups and the identities
\begin{align*}
   a\circ\left(b + c\right)
    = a\circ b - a + a\circ c \qquad \& \qquad a \circ a^-=-a+a
\end{align*}
are satisfied, for all $a,b,c\in S$, where $-a$ and $a^-$ denote the inverses of $a$ with respect to $+$ and $\circ$, respectively. Clearly, the sets of the idempotents $\E(S,+)$ and $\E(S, \circ)$ coincide. 
In particular, $(S,+)$ is a Clifford semigroup, and if $(S, \circ)$ also is, $S$ is called \emph{dual weak brace}. Skew braces are dual weak braces since in this case the structures  $(S,+)$ and $(S, \circ)$ are groups having the same identity. Moreover, if $(S,+)$ is abelian, then $S$ is a brace. In particular, by \cite[Theorem 2.1]{CaMaSt24}, any dual weak brace is a strong semilattice of a family of skew braces $\{B_\alpha\}_{\alpha\in Y}$ indexed by a semilattice $Y$.\\
Any weak brace $(S, +, \circ)$ gives rise to a solution $r:S\times S\to S\times S$ defined by
\begin{align*}
    r\left(a,b\right)
    = \left(-a + a\circ b, \ \left(-a + a\circ b\right)^-\circ a\circ b\right),
\end{align*}
for all $a,b\in S$, that is close to be bijective (see \cite[Theorem 11]{CaMaMiSt22}). In the particular case of skew braces, such a map $r$ is bijective and non-degenerate. Moreover, $r$ is involutive, i.e., $r^2=\id_{S \times S}$, if and only if $S$ is a brace. In addition, if $S$ is dual, the solution $r$ is the strong semilattice of the solutions $\{r_\alpha\}_{\alpha\in Y}$, where $r_\alpha$ is exactly the solution associated to each skew brace $B_\alpha$, which compose $S$ (see \cite[Theorem 4.1]{CCoSt21} and \cite[Proposition 2.4]{CaMaSt24}).

The paper \cite{DoRy22x} presents a way to assign a new ``deformed" solution to particular elements of skew braces. In the case of the identity element, we get the usual solution $r$ associated to a skew brace. The main motivation to study this family of maps lies in the fact that if one considers a finite skew brace, its identity, and another element giving rise to a deformed solution.
In \cite{DoRy22x}, one can also find the first hint that the two-sided skew braces are crucial in such an investigation. Recall that a skew brace $(S,+, \circ)$ is \emph{two-sided} if $
    \left(a + b\right)\circ c 
    = a\circ c - c + b\circ c
$
holds, for all $a,b,c\in S$ (see \cite[Definition 2.15]{CeSmVe19}).


In this paper, we extend and describe this class of solutions directly in the context of dual weak braces. To this end, in the third section, we introduce and investigate the \emph{(right) distributor} of a dual weak brace $(S, +, \circ)$, namely, the set
\begin{align*}
    \mathcal{D}_r(S)=\{z \in S \, \mid \, \forall \, a,b \in S \quad (a+b) \circ z=a\circ z-z+b \circ z\},
\end{align*}
that we prove to be an inverse subsemigroup of $\left(S,\circ\right)$ such that $\E(S)\subseteq \mathcal{D}_r(S)$. In the special case of a brace $B$, the distributor is a subbrace and coincides with the elements that associate with all the elements in $B$, namely $\mathcal{D}_r(B) = \{z \in B \, \mid \, \forall \, a,b \in B \quad (a\cdot b)\cdot z = a\cdot(b\cdot z) \}$ where $a\cdot b = a\circ b - a - b$, for all $a,b\in B$. In particular,  $\mathcal{D}_r(B)$ is a radical ring contained in $B$ that, more generally, it is not an ideal of $B$. We show that
for some cyclic braces (cf. \cite{Ru07-cyc}) it is.

The main result is contained in \cref{th: deform_dualweak}, where we show that, fixed $z\in S$, the map $r_z:S\times S\to S\times S$ given by
\begin{align*}
    r_z(a,b)=\left(-a \circ z+a \circ b \circ z, \ \left(-a \circ z+a \circ b \circ z\right)^-\circ a \circ b\right),
\end{align*}
for all $a,b \in S$, is a solution if and only if $z \in \mathcal{D}_r(S)$.  We call the map $r_z$ \emph{solution associated to $S$ deformed by $z$}. In such a case, $r_z$ is not bijective in general, but it has a behavior close to bijectivity and non-degeneracy, as we show in more detail in \cref{theor_rropr}. If $S$ is a skew brace and $z=0$ is the identity of the groups, then $r_z$ coincides with the usual solution $r$. However, although any idempotent determines a deformed solution in any dual weak brace, in general, the map $r_e$, with $e \in \E(S)$, does not coincide with $r$. More precisely, $r_e$ and $r$ are not equivalent in the sense of \cite{ESS99}. In this regard, we raise the issue of studying under which conditions on the parameters, two deformed solutions are equivalent and we give partial answers in this sense. In the particular cases of two-sided skew braces, we show that if two parameters $z$ and $w$ are in the same conjugacy classes of the multiplicative group, then $r_z$ and $r_w$ are equivalent. \\
We conclude the paper by proving that a deformed solution $r_z$ on a dual weak brace is a strong semilattice of solutions on each individual skew brace $B_{\alpha}$ if and only if the underlying semilattice is bounded by an element $1$ and $z \in B_1$.

\bigskip

\section{Preliminaries}

This section is devoted to introducing the structure of the weak brace and its properties useful for our treatment.

\medskip

Initially, for the ease of the reader, let us briefly recall some useful notions on inverse semigroups (see \cite{ClPr61, Ho95, Law98, Pe84}, for more details). A semigroup $S$ is called \emph{inverse semigroup} if, for each $a\in S$, there exists a unique element $a^{-1}$ of $S$ such that 
 $a=aa^{-1}a$ and $a^{-1}=a^{-1}aa^{-1}$, called the \emph{inverse} of $a$. The behaviour of inverse elements in an inverse semigroup $S$ is similar to that in a group, since $(a b)^{-1}=b^{-1} a^{-1}$ and $(a^{-1})^{-1}=a$, for all $a,b \in S$. Denote by $\E(S)$ the set of the idempotents of $S$, clearly, $e =e^{-1}$, for every $e \in \E(S)$, and the idempotents of $S$ are exactly the elements $a a^{-1}$ and $a^{-1}a$, for any $a\in S$. An inverse semigroup $S$ such that $aa^{-1}=a^{-1} a$,
for every $a\in S$, is named \emph{Clifford semigroup}. 
Equivalently, a Clifford semigroup $S$ is an inverse semigroup in which the idempotents are central or, according to \cite[Theorem 4.2.1]{Ho95}, it is a strong (lower) semilattice $Y$ of disjoint groups.

\medskip

Below, we recall the definition of weak brace and dual weak brace contained in \cite[Definition 5]{CaMaMiSt22} and in \cite[Definition 2]{CaMaSt24}.
\begin{defin}\label{def:weak}
    Let $S$ be a set endowed with two binary operations $+$ and $\circ$ such that $\left(S,+\right)$ and $\left(S,\circ\right)$ are inverse semigroups. Then, $\left(S, +, \circ\right)$ is said to be a \emph{weak (left) brace} if the following relations
 \begin{align*}
    a\circ\left(b + c\right)
    = a\circ b - a + a\circ c\qquad \&\qquad a\circ a^-
    = - a + a
 \end{align*}
 are satisfied, for all $a,b,c\in S$, where $-a$ and $a^-$ denote the inverses of $a$ with respect to $+$ and $\circ$, respectively.    Moreover, a weak brace $\left(S, +,\circ\right)$ is said to be a \emph{dual weak brace} if $\left(S,\circ\right)$ is a Clifford semigroup. 
\end{defin}

In any weak brace the sets of idempotents $E\left(S,+\right)$ and $E\left(S, \circ\right)$ coincide, thus we simply denote them by $E\left(S\right)$.  
As proved in \cite[Theorem 8]{CaMaMiSt22}, the additive structure of any weak brace is necessarily a Clifford semigroup.  An example of a weak brace that is not dual is contained in \cite[Example 2-3.]{CaMaMiSt22}. In particular, we say that $(S,+, \circ)$ is a \emph{two-sided weak brace} if  $\left(a + b\right)\circ c = a\circ c - c + b\circ c$, for all $a,b,c\in B$. Note that the notion of \emph{two-sided skew brace} can be found in \cite[Definition 2.15]{CeSmVe19}.

Clearly, \emph{skew braces} \cite{GuVe17} are weak braces since the additive and the multiplicative structures are groups with the same identities. Besides, \emph{braces} \cite{Ru07}  are skew braces in which the additive group is abelian. 
Moreover, in any weak brace, $a \circ \left(a^-+b\right)=-a+a \circ b$, for all $a,b \in S$ (see \cite[Proposition 16]{CaMaMiSt22}).
Easy examples of dual weak braces can be obtained starting from any Clifford semigroup $\left(S, \circ\right)$, by setting $a + b:= a\circ b$ or $a + b:= b\circ a$, for all $a,b\in S$. These are the \emph{trivial weak brace} and the \emph{almost trivial weak brace}, respectively.

\medskip

Any dual weak brace is a strong semilattice of skew braces, as we recall below.
\begin{theor}\emph{\cite[Theorem 2.1]{CaMaSt24}}\label{th:CMS-23x}
    Let $(Y,\land)$ be a semilattice and $\left\{B_{\alpha}\ \left|\ \alpha \in Y\right.\right\}$ a family of disjoint skew braces. For each pair $\alpha,\beta$ of elements of $Y$ such that $\beta \leq \alpha$, let $\phi_{\alpha, \beta}:B_{\alpha}\to B_{\beta}$ be a homomorphism of skew braces such that
\begin{enumerate}
    \item for every $\alpha\in Y$ \  $\phi_{\alpha,\alpha} = \id_{B_\alpha}$,
    \item for all $\gamma,\beta,\alpha\in Y$ such that $\gamma\leq \beta\leq \alpha$, 
 \  $\phi_{\beta, \gamma}\phi_{\alpha,\beta}=\phi_{\alpha,\gamma}$. 
\end{enumerate}
Then, set $S:= \mathop{\dot{\bigcup}}\limits_{\alpha\in Y}B_{\alpha}$, the triple $\left(S,+,\circ\right)$ is a dual weak brace where, for all $a\in B_{\alpha}$ and $b\in B_{\beta}$, for all $\alpha,\beta\in Y$, 
$$
    a + b:= \phi_{\alpha\land \beta,\alpha}(a)\, \underset{\alpha\land\beta}{+}\,\phi_{\alpha\land \beta,\beta}(b)
    \qquad \&\qquad 
    a\circ b:= \phi_{\alpha\land \beta,\alpha}(a)\, \underset{\alpha\land\beta}{\circ}\,\phi_{\alpha\land \beta,\beta}(b).
$$
We call such a dual weak brace the \emph{strong semilattice $S$ of skew braces $B_{\alpha}$}, with $\alpha\in Y$, and denote it by 
\mbox{$S = \left[Y,B_\alpha,\phi_{\alpha,\beta}\right]$.}  
Conversely, any dual weak brace can be obtained in this way.
\end{theor} 

Into the specific, given a dual weak brace $S$, the underlying sets of the skew braces $B_\alpha$ that realize $S$ are exactly the underlying sets of the groups composing both the Clifford semigroup $(S, +)$ and the Clifford semigroup  $(S, \circ)$, as shown in the proof of \cite[Theorem 2.1]{CaMaSt24}.


\medskip

To avoid overloading the notation, hereinafter, for all $a\in B_{\alpha}$ and $b\in B_{\beta}$, we will write
$$
a + b=\phi_{\alpha\beta,\alpha}(a) + \phi_{\alpha\beta,\beta}(b)
\qquad \&  \qquad 
a\circ b=\phi_{\alpha\beta,\alpha}(a)\circ \phi_{\alpha\beta,\beta}(b),
$$
thus the two operations of each skew brace $B_\alpha$ will be clear from the context and we denote the operation on the semilattice $Y$ simply by the juxtaposition.
\medskip

The following are easy instances of dual weak braces. 
\begin{ex} Let $(B,+,\circ)$ be a brace and $\{I_\alpha\}_{\alpha\in \mathbb{N}}$ a family of ideals (see \cite[Definition 2.1]{GuVe17}) such that $I_{0}=\{0\}$, and $I_\alpha\subseteq I_{\alpha+1}$, for every $\alpha \in \mathbb{N}$. Then, considering the following sequence of canonical projections of braces
    $$
B\xrightarrow{\pi_1}B/I_1\xrightarrow{\pi_2}\cdots\xrightarrow{\pi_\alpha} B/I_{\alpha}\xrightarrow{\pi_{\alpha+1}}B/I_{\alpha+1}\xrightarrow{\pi_{\alpha+2}}\cdots,
    $$
    we obtain that the strong semilattice $[\mathbb{N},B/I_\alpha ,\pi_{\alpha}]$ 
    is a dual weak brace.
\end{ex}
\medskip

\begin{ex}\label{ex_u}
      Let $Y \subseteq \mathbb{N}$ be a finite set. Then, for every $n \in Y$, $U_n:=\left(U(\mathbb{Z}/2^n\mathbb{Z}), +_1, \circ \right)$ is a two-sided skew brace on the set of units of $\mathbb{Z}/2^n\mathbb{Z}$, with addition defined by $a+_1b:=a-1+b\pmod {2^n}$, for all $a,b\in U\left(\mathbb{Z}/2^{n}\mathbb{Z}\right)$,  and multiplication given by the multiplication modulo $2^n.$ 
  Consider, for all $n,m\in Y$ such that $m\leq n$, the homomorphism $\phi_{n,m}:U_n\to U_m,$ $a\mapsto a\pmod{m}$. Then, 
  $S=[Y, U_n, \phi_{n, m}]$ is a dual weak brace.  
\end{ex}

The motivation for studying such algebraic structures lies mainly in the fact that they give rise to solutions.

\begin{theor}\emph{\cite[Theorem 11]{CaMaMiSt22}}
    Let $\left(S, +, \circ\right)$ be a weak brace. Then, the map  $r:S\times S\to S\times S$ defined by
\begin{align*}
    r\left(a,b\right)
    = \left(-a + a\circ b, \ \left(-a + a\circ b\right)^-\circ a\circ b\right),
\end{align*}
for all $a,b\in S$, is a solution.
\end{theor}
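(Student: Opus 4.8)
My plan is to verify the braid relation coordinate by coordinate. Write $r(a,b)=(\lambda_a(b),\rho_b(a))$ with $\lambda_a(b)=-a+a\circ b$ and $\rho_b(a)=\lambda_a(b)^-\circ a\circ b$, and apply the two composites $(r\times\id_S)(\id_S\times r)(r\times\id_S)$ and $(\id_S\times r)(r\times\id_S)(\id_S\times r)$ to a generic triple $(a,b,c)\in S^3$. Comparing the three coordinates of the two outputs reduces the Yang--Baxter identity to the requirement that, for all $a,b,c\in S$,
\begin{align}
\lambda_{\lambda_a(b)}\circ\lambda_{\rho_b(a)}&=\lambda_a\circ\lambda_b, \tag{L}\\
\rho_{\lambda_{\rho_b(a)}(c)}\big(\lambda_a(b)\big)&=\lambda_{\rho_{\lambda_b(c)}(a)}\big(\rho_c(b)\big), \tag{M}\\
\rho_c\circ\rho_b&=\rho_{\rho_c(b)}\circ\rho_{\lambda_b(c)}. \tag{R}
\end{align}
So the work splits into: recording the elementary identities linking $+$, $\circ$, $\lambda$ and $\rho$ in a weak brace; deducing (L) and (R); and proving (M), which carries the genuine content.

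The identities I would use are the following, all derivable from \cref{def:weak}, from $\E(S,+)=\E(S,\circ)$, and from \eqref{pro_dual_idemp} together with the Clifford property of $(S,+)$, and all already in \cite{CaMaMiSt22}: $a\circ b=a+\lambda_a(b)$ (because $a\circ a^-=-a+a$ is a left identity for $a\circ b$, and because the additive identities of $\lambda_a(b)$ and of $a\circ b$ agree, as $(a\circ b)\circ(a\circ b)^-\le a\circ a^-$); each $\lambda_a$ is an additive endomorphism and $\lambda$ is multiplicative, $\lambda_{a\circ b}=\lambda_a\circ\lambda_b$; the pairing identity $\lambda_a(b)\circ\rho_b(a)=a\circ b$, with $\lambda_a(b)$, $\rho_b(a)$ and $a\circ b$ in one component; and $\rho$ is anti-multiplicative, $\rho_{a\circ b}=\rho_b\circ\rho_a$ (equivalently, $\lambda_a(b\circ c)=\lambda_a(b)\circ\lambda_{\rho_b(a)}(c)$). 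The only point requiring care, relative to the skew-brace case, is tracking idempotents; this is managed through their centrality in $(S,+)$ and the rule $e+x=e\circ x$ of \eqref{pro_dual_idemp}.

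With these in hand, (L) and (R) are one line each. For (L),
\[
\lambda_{\lambda_a(b)}\circ\lambda_{\rho_b(a)}=\lambda_{\lambda_a(b)\circ\rho_b(a)}=\lambda_{a\circ b}=\lambda_a\circ\lambda_b,
\]
using multiplicativity of $\lambda$ and pairing; and for (R),
\[
\rho_{\rho_c(b)}\circ\rho_{\lambda_b(c)}=\rho_{\lambda_b(c)\circ\rho_c(b)}=\rho_{b\circ c}=\rho_c\circ\rho_b,
\]
using anti-multiplicativity of $\rho$ and pairing.

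The remaining relation (M) is, I expect, the main obstacle: it should come down to a careful but routine computation inside $(S,+)$. The plan is to expand both sides by the definitions of $\rho_\bullet$ and $\lambda_\bullet$ and collapse the resulting $\circ$-products via pairing and multiplicativity of $\lambda$. Concretely, since $\lambda_{\lambda_a(b)}\big(\lambda_{\rho_b(a)}(c)\big)=\lambda_{a\circ b}(c)$, the left-hand side of (M) becomes $\lambda_{a\circ b}(c)^-\circ\big(\lambda_a(b)+\lambda_{a\circ b}(c)\big)$ once $x\circ y=x+\lambda_x(y)$ is applied; on the right-hand side one records $\rho_{\lambda_b(c)}(a)=\lambda_{a\circ b}(c)^-\circ\big(a+\lambda_{a\circ b}(c)\big)$ and $\rho_{\lambda_b(c)}(a)\circ\rho_c(b)=\lambda_{a\circ b}(c)^-\circ a\circ b\circ c$, and then, using that $\lambda_{\lambda_{a\circ b}(c)^-}$ is an additive endomorphism and is multiplicative, one reduces $-\,\rho_{\lambda_b(c)}(a)+\rho_{\lambda_b(c)}(a)\circ\rho_c(b)$ to the same expression $\lambda_{a\circ b}(c)^-\circ\big(\lambda_a(b)+\lambda_{a\circ b}(c)\big)$. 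The genuine subtlety here --- and the point at which the weak-brace setting diverges from the group case of skew braces --- is keeping track of the idempotents $-a+a$, $-b+b$ and $-c+c$ throughout, using repeatedly that a larger idempotent is a two-sided identity for the elements of a smaller component. Once (M) holds, (L), (M) and (R) together give the Yang--Baxter identity, and $r$ is a solution.
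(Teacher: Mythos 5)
The paper does not prove this theorem itself (it is quoted from \cite[Theorem 11]{CaMaMiSt22}), but your sketch is correct and follows essentially the same route the paper uses for its deformed analogue \cref{th: deform_dualweak}: the reduction to your (L), (M), (R) is exactly \cref{rem_sol}, and (L), (R) follow as you say from multiplicativity of $\lambda$, anti-multiplicativity of $\rho$ and the pairing identity $\lambda_a(b)\circ\rho_b(a)=a\circ b$. Your outline of (M) also closes up as claimed --- writing $u=\lambda_{a\circ b}(c)$, left distributivity together with $u^-\circ u=-u^-+u^-$ and centrality of additive idempotents gives $u^-\circ(a+u)=u^-\circ a-u^-$, from which both sides reduce to $u^-\circ\left(\lambda_a(b)+u\right)$ --- so the only caveat is that this step is sketched rather than written out.
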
  
\noindent Such a map $r$ has a behaviour close to bijectivity since there exists the solution $r^{op}$ associated to the \emph{opposite weak brace}
$\left(S,+^{op}, \circ\right)$ of $S$, where $a+^{op}b:= b + a$, for all $a,b \in S$, such that
  \begin{align*}
      r\, r^{op}\, r = r, \qquad
      r^{op}\, r\, r^{op} = r^{op}, \qquad \&\qquad rr^{op} = r^{op}r,
  \end{align*}
namely, $r$ is a
completely regular element in $\Map(S \times S)$. In particular, if $S$ is a skew brace, then $r^{op}=r^{-1}$, see \cite{KoTr20}. 
It is shown in \cite[Proposition 2.4]{CaMaSt24} that the solution $r$ associated to any dual weak brace $S = \left[Y,B_\alpha,\phi_{\alpha,\beta}\right]$ is the strong semilattice of the bijective solutions $r_\alpha$ associated to any skew brace $B_\alpha$, a construction technique of solutions provided in \cite[Theorem 4.1]{CCoSt21} and that we recall below.

\begin{theor}\emph{\cite[Theorem 4.1]{CCoSt21}}\label{thm:4.1}
Let $(Y, \wedge)$ be a (lower) semilattice, $\{r_{\alpha}\ |\ \alpha\ \in Y\}$ a family of disjoint solutions on each $X_\alpha$ indexed by $Y$,  and for each pair $\alpha, \beta\in Y$ with $\beta\leq \alpha$,  a map $\phi_{\alpha,\beta}:X_{\alpha}\to X_\beta.$ Let $X=\mathop{\dot{\bigcup}}\limits{\alpha\in Y}X_\alpha$ and $r:X\times X\to X\times X$ the map defined by
$$
r(x,y):=r_{\alpha\beta}(\phi_{\alpha,\alpha\beta}(x),\phi_{\beta,\alpha\beta}(y)),
$$
for all $x\in X_\alpha$ and $y\in X_\beta.$ Then $r$ is a solution if the following conditions are satisfied:
\begin{enumerate}
\item $\phi_{\alpha,\alpha}$ is the identity map of $X_\alpha$ for every $\alpha\in Y,$
\item $\phi_{\beta,\gamma}\phi_{\alpha,\beta}=\phi_{\alpha,\gamma}$ for all $\alpha,\beta,\gamma\in Y$ such that $\gamma\leq \beta\leq \alpha,$
\item $(\phi_{\alpha,\beta}\times \phi_{\alpha,\beta})r_{\alpha}=r_{\beta}(\phi_{\alpha,\beta}\times \phi_{\alpha,\beta})$, for all $\alpha, \beta \in Y$ such that $\beta \leq \alpha$.
\end{enumerate}
We call $r$ \emph{strong semilattice of solutions $r_{\alpha}$ indexed by $Y$}.
\end{theor}

\begin{prop}\emph{ \cite[Proposition 2.4]{CaMaSt24}}\label{th:strong-semil-construction}
	Let $S = \left[Y,B_\alpha,\phi_{\alpha,\beta}\right]$ be a dual weak brace and $\left\{r_{\alpha}\ \left|\ \alpha\in Y\right.\right\}$  the family of disjoint solutions on each $B_{\alpha}$, for every $\alpha \in Y$. Then, the solution $r$ associated to $S$ is the strong semilattice of the solutions $r_\alpha$, for every $\alpha \in Y$.	
\end{prop}

Given a dual weak brace $S$, we are used to denote the components of its solution $r$ by introducing the maps $\lambda_a,\rho_b:S\to S$ defined by 
\begin{align*}
    \lambda_a\left(b\right) = - a + a\circ b
    \qquad \&\qquad
    \rho_b\left(a\right)= \lambda_a\left(b\right)^-\circ a \circ b,
\end{align*}
for all $a,b\in S$. 
The components of the map $r^{op}$ are given by
$$\lambda^{op}_{a}\left(b\right) = a\circ b - a = \left(\rho_{a^-}\left(b^-\right)\right)^- \quad \& \quad \rho^{op}_{b}\left(a\right) =\left(a\circ b - a \right)^-\circ a\circ b = \left(\lambda_{b^-}\left(a^-\right)\right)^-.$$
The map $r$ also is close to being non-degenerate, since
\begin{align*}
    \lambda_a\lambda_{a^-}\lambda_a
    = \lambda_{a},
    \qquad   \lambda_{a^-}\lambda_{a}\lambda_{a^-}
    = \lambda_{a^-}, \qquad &\&\qquad
    \lambda_a\lambda_{a^-}
    =\lambda_{a^-}\lambda_{a}, \\  \rho_{a}\rho_{a^-}\rho_{a}
    = \rho_a,
    \qquad
    \rho_{a^-}\rho_{a}\rho_{a^-} = \rho_{a^-},
    \qquad &\&\qquad
    \rho_{a}\rho_{a^-}
    = \rho_{a^-}\rho_{a}, 
\end{align*}
for every $a \in S$. Clearly, if $S$ is a skew brace, such maps are bijective.
By \cite[Lemma 3]{CaMaMiSt22}, it holds that
$\lambda_{a}\left(b\right)\circ\rho_{b}\left(a\right) = a\circ b$,
for all $a,b\in S$. In addition, one has that the map $\lambda:S\to \End\left(S,+\right), a\mapsto \lambda_a$ is a homomorphism of the inverse semigroup $\left(S,\circ\right)$ into the endomorphism semigroup of $\left(S,+\right)$ and the map $\rho:S\to \Map(S), b\mapsto \rho_b$  is a semigroup anti-homomorphism of the inverse semigroup $\left(S,\circ\right)$ to the monoid $\Map(S)$ of the maps from $S$ into itself. 

\medskip

 In the following lemma, we collect some properties that we will use throughout the paper.

\begin{lemma} \emph{(\cite[Lemma 1, Proposition 9]{CaMaMiSt22}, \cite[Lemma 1]{CaMaSt24})} \label{lemma_dual_weak}
    Let $(S,+, \circ)$ be a weak brace. Then, the following hold:
    \begin{enumerate}
    \item \label{lem:1}$a \circ b=a +\lambda_a(b)$,
        \item \label{lem:2}$a+b=a \circ \lambda_{a^-}\left(b\right)$,
        \item \label{lem:3} $\lambda_a(b)=a \circ b \circ \rho_b(a)^-$,
            \item\label{lem:5} $a \circ (-b)=a-a \circ b+a$,
    \end{enumerate}
    for all $a,b \in S$.
\end{lemma}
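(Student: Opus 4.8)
The plan is to obtain all four identities by unwinding the definitions $\lambda_a(b) = -a + a\circ b$ and $\rho_b(a) = \lambda_a(b)^-\circ a\circ b$ and pushing idempotents around, the standing tools being: left distributivity $a\circ(b+c) = a\circ b - a + a\circ c$ together with the axiom $a\circ a^- = -a+a$; the elementary inverse‑semigroup identities $xx^{-1}x = x$, $(xy)^{-1} = y^{-1}x^{-1}$, $(x^{-1})^{-1} = x$, and the fact that homomorphisms of inverse semigroups preserve inverses; the Clifford property of $(S,+)$, giving $-x + x = x + (-x) =: e_x$ and $x + e_x = e_x + x = x$; and, used repeatedly, the absorption rule \eqref{pro_dual_idemp}, $e + x = e\circ x$ for $e\in\E(S)$. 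First I would record a one‑line preliminary: $x\mapsto e_x$ is multiplicative for both operations ($e_{x+y} = e_x+e_y$, $e_{x\circ y} = e_x\circ e_y = e_x+e_y$, $e_{-x} = e_x$, using centrality of idempotents), so in particular $e_{\lambda_a(b)} = e_a + e_{a\circ b} = e_{a\circ b}$.

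For (1), I would compute $a + \lambda_a(b) = (a + (-a)) + a\circ b = (a\circ a^-) + a\circ b = (a\circ a^-)\circ a\circ b = a\circ b$, where the third equality is \eqref{pro_dual_idemp} and the last is $xx^{-1}x = x$. For (2), I would first note $\lambda_a(b) = a\circ(a^- + b)$ — either citing \cite[Proposition 16]{CaMaMiSt22} or rederiving it in one line, $a\circ(a^-+b) = a\circ a^- - a + a\circ b = (-a)+a+(-a)+a\circ b = -a + a\circ b$. Replacing $a$ by $a^-$ gives $\lambda_{a^-}(b) = a^-\circ(a+b)$, hence $a\circ\lambda_{a^-}(b) = (a\circ a^-)\circ(a+b) = (a\circ a^-) + (a+b) = ((a\circ a^-)\circ a) + b = a+b$, again by \eqref{pro_dual_idemp} and $xx^{-1}x=x$.

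For (3), from $(xy)^{-1}=y^{-1}x^{-1}$ and $(x^{-1})^{-1}=x$ one gets $\rho_b(a)^- = (\lambda_a(b)^-\circ a\circ b)^- = b^-\circ a^-\circ\lambda_a(b)$, whence $a\circ b\circ\rho_b(a)^- = (a\circ b)\circ(a\circ b)^-\circ\lambda_a(b) = e_{a\circ b}\circ\lambda_a(b)$; by the preliminary remark $e_{a\circ b}=e_{\lambda_a(b)}$, so the right‑hand side collapses to $\lambda_a(b)$. For (4), since $\lambda_a\in\End(S,+)$ (stated in the excerpt) and such endomorphisms preserve inverses, $\lambda_a(-b) = -\lambda_a(b)$; then (1) gives $a\circ(-b) = a+\lambda_a(-b) = a-\lambda_a(b)$, while on the other side, using $-\lambda_a(b) = -(-a+a\circ b) = -(a\circ b)+a$, one has $a - a\circ b + a = a + \big(-(a\circ b)+a\big) = a + (-\lambda_a(b)) = a-\lambda_a(b)$, so the two sides agree.

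These are all short manipulations; the one point requiring genuine care — and the part I would write out in full — is the bookkeeping with idempotents in the Clifford semigroup $(S,+)$ (tracking whether a given idempotent is $x+(-x)$ or $(-x)+x$, and checking that elements absorb idempotents on the correct side), in particular setting up the multiplicativity of $x\mapsto e_x$ cleanly so that $e_{\lambda_a(b)}=e_{a\circ b}$ is available for (3). The only non‑formal ingredient is the identity $\lambda_a(b)=a\circ(a^-+b)$ feeding into (2), and even that is a one‑line consequence of the axioms.
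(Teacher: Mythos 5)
Your proof is correct. Note that the paper itself gives no argument for \cref{lemma_dual_weak}: it is explicitly presented as a collection of properties imported from \cite{CaMaMiSt22, CaMaSt23}, so there is no in-paper proof to compare against; your derivations are the standard ones and they all check out — (1) and (2) via \eqref{pro_dual_idemp} together with $x\circ x^-\circ x=x$, (3) by cancelling the idempotent $(a\circ b)\circ(a\circ b)^-$ against $\lambda_a(b)$, and (4) from $\lambda_a\in\End(S,+)$ preserving additive inverses. One small caution: your preliminary claim that $x\mapsto e_x$ is multiplicative \emph{for both operations}, justified by ``centrality of idempotents,'' overreaches in the stated generality — the lemma is for an arbitrary weak brace, where $(S,+)$ is Clifford but $(S,\circ)$ is only an inverse semigroup, so idempotents need not be central for $\circ$ and $e_{x\circ y}=e_x\circ e_y$ is not immediate. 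This does not damage the proof, because the only instance you actually use, $e_{\lambda_a(b)}=e_{-a}+e_{a\circ b}=e_a\circ e_{a\circ b}=a\circ a^-\circ a\circ b\circ b^-\circ a^-=e_{a\circ b}$, follows from additive multiplicativity in the Clifford semigroup $(S,+)$ plus \eqref{pro_dual_idemp} and $a\circ a^-\circ a=a$; I would simply replace the general claim by this computation.
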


\noindent By $1.$ and $2.$ in \cref{lemma_dual_weak}, we obtain that any idempotent $e\in\E\left(S\right)$ satisfies the following 
 \begin{align}\label{pro_dual_idemp}
 e + a = e\circ a 
 = \lambda_e\left(a\right), 
\end{align}
for every $a\in S$.
\medskip

\bigskip

\section{Deformed solutions on dual weak braces}
This section aims to describe deformed solutions associated to any dual weak brace, namely solutions obtained by deforming the classical one. These novel solutions have been introduced in the context of skew braces in \cite{DoRy22x}.

\medskip

\begin{theor}\emph{\cite[Theorem 2.4]{DoRy22x}}
    Let $(B,+, \circ)$ be a skew brace and $z \in B$ such that
    \begin{align}\label{abcz}
        \left(a-b+c\right) \circ z= a \circ z - b \circ z+c \circ z,
    \end{align}
    for all $a,b,c \in B$. Then, the map $\check{r}_{z}: B \times B \to B \times B$ given by 
      \begin{align*}
       \check{r}_z(a,b)=\left(a\circ b-a\circ z+z, \ \left(a\circ b-a\circ z+z\right)^{-} \circ a \circ b \right),
    \end{align*}
    for all $a,b \in B$, is a non-degenerate and bijective solution, called \emph{deformed solution by $z$ on $B$}.
\end{theor}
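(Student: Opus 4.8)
The plan is to verify directly that $\check{r}_z$ satisfies the braid relation $(\check{r}_z \times \id)(\id \times \check{r}_z)(\check{r}_z \times \id) = (\id \times \check{r}_z)(\check{r}_z \times \id)(\id \times \check{r}_z)$, exploiting the fact that $\check{r}_z$ is built from the classical solution $r$ of the skew brace by conjugation/twisting with the translation $a \mapsto a \circ z$. First I would write the two components of $\check{r}_z$ in terms of the $\lambda$ and $\rho$ maps of the skew brace: setting $\check\lambda^z_a(b) = a \circ b - a \circ z + z$ and $\check\rho^z_b(a) = \check\lambda^z_a(b)^- \circ a \circ b$, I would record the identity $\check\lambda^z_a(b) \circ \check\rho^z_b(a) = a \circ b$ (immediate from the definitions and associativity of $\circ$), which mirrors $\lambda_a(b)\circ\rho_b(a)=a\circ b$ and is the algebraic backbone of the braid computation. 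Then I would use \eqref{abcz} to derive the key cocycle-type relations — namely that $\check\lambda^z$ still behaves multiplicatively in the first argument up to the correction term, i.e. something like $\check\lambda^z_{a\circ b}(c) = \check\lambda^z_a(\lambda_b(c) - \text{(correction)})$ — reducing everything to relations already known for the undeformed $r$.

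A cleaner route, which I expect to be the one actually used, is to relate $\check{r}_z$ to $r$ by a change of variables. Observe that $\check{r}_z(a,b) = (a\circ b - a\circ z + z,\ (\cdot)^- \circ a\circ b)$, and compare with $r$ evaluated at suitably $\circ$-translated arguments. Concretely, I would look for maps $f, g$ (built from $x \mapsto x\circ z$ and its inverse, or from the heap structure $[x,y,w] = x - y + w$) such that $\check{r}_z = (f\times f)\circ r \circ (g \times g)$ on the relevant domain, or a one-sided variant thereof; then the braid relation for $\check{r}_z$ follows formally from that for $r$ provided $f$ and $g$ intertwine correctly. Condition \eqref{abcz} is precisely what guarantees that $z$-translation interacts correctly with the additive heap operation $a - b + c$, so that this conjugation is well-defined and compatible with the third Reidemeister move. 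The non-degeneracy and bijectivity of $\check{r}_z$ would then follow because $x \mapsto x\circ z$ is a bijection of the group $(B,\circ)$, hence $f$ and $g$ are bijections, and $r$ is already non-degenerate and bijective for a skew brace.

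The main obstacle is establishing the intertwining identity itself: one must check that the twisting maps $f$ and $g$ really do conjugate $r$ into $\check{r}_z$ without leftover error terms, and this is where \eqref{abcz} enters essentially — without left-distributivity of $-\circ z$ over the heap operation, the second component $\check\rho^z_b(a)$ fails to transform correctly and the braid relation breaks. So I would spend the bulk of the proof carefully unwinding both sides of the braid equation on a generic triple $(a,b,c)$, substituting $\check\lambda^z$ and $\check\rho^z$, and repeatedly applying \eqref{abcz} together with the skew-brace identities $\lambda_a(b+c) = \lambda_a(b) - a + \lambda_a(c)$ and $\lambda_{a\circ b} = \lambda_a \lambda_b$ to collapse the two expressions to a common normal form. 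The bijectivity claim is comparatively routine: I would exhibit the inverse explicitly, or note that $\check{r}_z$ differs from the bijective $r$ by composition with the bijections $a \mapsto a\circ z$ in appropriate slots, and check left and right non-degeneracy by verifying that $a \mapsto \check\lambda^z_a(b)$ and $b \mapsto \check\rho^z_b(a)$ are bijective for each fixed argument, again using that $(B,\circ)$ is a group.
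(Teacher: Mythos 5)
The statement you are proving is recalled in the paper from \cite{DoRy22x} without proof; the proof the paper actually carries out is for its generalization, \cref{th: deform_dualweak}, and it is a \emph{direct} verification of the three relations (Y1)--(Y3) of \cref{rem_sol}, powered by the identities of \cref{le:sigma-tau} (the product formula $\sigma^z_a(b)\circ\tau^z_b(a)=a\circ b\circ z\circ z^-$, the anti-homomorphism property of $\tau^z$, and the rewriting $\sigma^z_a(b)=a\circ b\circ b^-\circ(a^-\circ z^-+b)\circ z$). Your first, ``direct'' route is therefore the right one in outline, and your observation that $\check\lambda^z_a(b)\circ\check\rho^z_b(a)=a\circ b$ is correct for a skew brace and is indeed the backbone of (Y2) and (Y3). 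But the route you say you ``expect to be the one actually used'' --- writing $\check r_z=(f\times f)\, r\,(g\times g)$ for bijections built from $x\mapsto x\circ z$ and transporting the braid relation from $r$ --- contains a genuine gap, in fact two. First, a sandwich $(f\times f)\,r\,(g\times g)$ only inherits the braid relation from $r$ when $g=f^{-1}$, i.e.\ when it is an honest equivalence in the sense of \cite{ESS99}; for general $f,g$ the factors $G_{12}F_{23}$ appearing in the middle of the braid word do not cancel. Second, such an equivalence cannot exist in general: the paper explicitly points to \cite[Examples 2.14--2.15]{DoRy22x} for deformed solutions that are \emph{not} equivalent to one another (and since $r$ itself is the deformation by $0$, not all $\check r_z$ are equivalent to $r$). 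More concretely, no substitution $a\mapsto g(a)$, $b\mapsto g(b)$ can produce the first component $a\circ b-a\circ z+z$, because the product $a\circ b$ appears undeformed while only $a$ is paired with $z$; already at $z=0$ one gets $\check r_0=r^{op}=r^{-1}$, not $r$.

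So the bulk of the work you defer to ``unwinding both sides of the braid equation'' is exactly the part that cannot be outsourced to a conjugation argument and must be done by hand. The missing ingredients are: (i) the multiplicativity $\check\lambda^z_{a}\check\lambda^z_{b}=\check\lambda^z_{a\circ b}$ in the first argument (which for this $\check{}$-version follows from left distributivity alone and gives (Y1)); (ii) the anti-homomorphism property of $\check\rho^z$, which is where \eqref{abcz} enters essentially and which gives (Y3); and (iii) the closed formula for $\check\lambda^z_a(b)$ in terms of $\lambda$ that lets both sides of (Y2) collapse to $\left(\check\lambda^z_{\check\lambda^z_a(b)}\check\lambda^z_{\check\rho^z_b(a)}(c)\right)^-\circ a\circ b\circ c\circ(\cdots)$. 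For bijectivity and non-degeneracy your instinct to ``exhibit the inverse explicitly'' is the correct and intended one --- the paper's remark records $\check r_z^{-1}=r_{z^-}$ and the explicit inverses of the component maps --- whereas your alternative justification via ``composition with bijections in appropriate slots'' again leans on the untenable conjugation picture and should be dropped.
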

\noindent We denote the components of $\check{r}_z$ by introducing the maps $$\check{\sigma}^z_a(b)=a \circ b-a \circ z + z \qquad \& \qquad \check{\tau}^z_b(a)=\left(a \circ b-a \circ z + z\right)^-\circ a \circ b,$$ for all $a,b \in B$. 

\begin{rem}
Let $(B, +, \circ)$ be a skew brace. Note that $\check{r}_0$ coincides with the inverse solution of the solution $r$ associated to $B$, namely $\check{r}_0=r^{op}$. 
In general, if $z \in B$ satisfies \eqref{abcz}, one can check that  $\check{r}_z^{-1}$ is the map $r_{z^-}: B \times B \to B \times B$ given by
  \begin{align*}
       r_{z^-}(a,b)=\left(-a\circ z^-+a\circ b\circ z^-, \left(-a\circ z^-+a\circ b\circ z^-\right)^{-} \circ a \circ b \right),
    \end{align*}
    for all $a,b \in B$. Such a map $r_{z^-}$ clearly is non-degenerate as $\check{r}_z$ is. Indeed, for all $a, b \in B$, if we  consider the maps $\sigma_a^z, \tau_b^z: B \to B$ defined by
\begin{align*}
\sigma_a^{z}(b) = -a\circ z+a\circ b\circ z
\qquad \& \qquad
\tau_b^{z}(a) = \left(-a\circ z+a\circ b\circ z\right)^{-} \circ a \circ b,
\end{align*}
the components of $r_{z^-}$ are $\sigma_a^{z^-}(b)$ and $\tau_b^{z^-}(a)$, respectively, and are such that
\begin{align*}
  \left(\sigma_a^{z^-}\right)^{-1}(b)&=\sigma_{a^-}^{z}(b)=\left(\check{\tau}_a^z\left(b^-\right) \right)^-\\\left(\tau^{z^-}_b\right)^{-1}(a)&=\tau_{b^-}^{z^{-}}(a)=\left(\check{\sigma}_b^{z^-}\left(a^-\right)\right)^-,
\end{align*}
for all $a,b \in B$. 
\end{rem}

Throughout our work,  we lay the groundwork to prove that the set of elements $z$ which gives rise to a deformed solution is a subgroup of $(B, \circ)$. This fact will allow  studying the map
 \begin{align*}
       r_{z}(a,b)=\left(-a\circ z+a\circ b\circ z, \ \left(-a\circ z+a\circ b\circ z\right)^{-} \circ a \circ b \right),
    \end{align*}
 avoiding the use of $z^-$, that in the case $z=0$ exactly coincides with the usual solution $r$ associated to $B$. For this reason, hereinafter, we will study such a map $r_z$ directly in the context of a dual weak brace $(S, +, \circ)$ and prove that it is a solution.\\
Note that using \eqref{abcz}, it  can be also written as
\begin{align}
\tag{$\star$} \label{star}
    r_{z}\left(a,b\right) = \left(z^-\circ\lambda_{z\circ a}\left(b\right)\circ z,\ z^-\circ\rho_b\left(z\circ a\right)\right),
\end{align}
for all $a,b\in S$. Below, we provide an identity that is equivalent to the relation \eqref{abcz}. 
\begin{lemma}\label{lemmaD}
    Let $(S,+, \circ)$ be a dual weak brace and $z \in S$. Then, \eqref{abcz} is equivalent to
\begin{align}\label{condition_solu}\tag{$D$}
    \left(a+b\right)\circ z=a \circ z - z + b \circ z,
\end{align}
for all $a,b \in S$. 
\begin{proof}
The identity \eqref{condition_solu} can be trivially obtained by \eqref{abcz} and \eqref{pro_dual_idemp} taking $b = z \circ z^-$.
Conversely, if $x\in S$, we get
\begin{align}\label{eq:-x}
    \left(-x\right)\circ z = z - x\circ z + z.
\end{align}
Indeed, $x \circ x^- \circ z = \left(x-x\right)\circ z = x\circ z - z  + \left(-x\right)\circ z$, thus 
$$\left(-x\right)\circ z \underset{\eqref{pro_dual_idemp}}{=} z-x\circ z+x \circ z-z+\left(-x\right)\circ z=z-x \circ z +  x \circ x^- \circ z\underset{\eqref{pro_dual_idemp}}{=}z-x \circ z+z.$$
Now, let $a,b,c\in S$, applying \eqref{eq:-x} and \eqref{pro_dual_idemp}, we obtain
\begin{align*}
    \left(a-b+c\right) \circ z&= a \circ z -z +(-b+c) \circ z=a\circ z -z+(-b) \circ z-z+c \circ z\\
    &=a \circ z - b \circ z+c \circ z,
\end{align*}
i.e., \eqref{abcz} is satisfied. 
\end{proof}
\end{lemma}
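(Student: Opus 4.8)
The statement to prove is that, in a dual weak brace $(S,+,\circ)$ with a fixed element $z\in S$, the condition \eqref{abcz} (the three-term distributivity $\left(a-b+c\right)\circ z = a\circ z - b\circ z + c\circ z$) is equivalent to the two-term condition \eqref{condition_solu}, namely $\left(a+b\right)\circ z = a\circ z - z + b\circ z$.

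The plan is to argue both implications directly via short manipulations, relying on the defining axioms of a weak brace together with the key identities already in the excerpt: property \eqref{pro_dual_idemp} (that $e+a = e\circ a = \lambda_e(a)$ for $e\in\E(S)$), the left distributivity $a\circ(b+c)=a\circ b - a + a\circ c$, and item \eqref{lem:5} of \cref{lemma_dual_weak} (that $a\circ(-b) = a - a\circ b + a$). For the forward implication $\eqref{abcz}\Rightarrow\eqref{condition_solu}$, the trick is to specialise: take $b = z\circ z^-$ in \eqref{abcz}. Since $z\circ z^-\in\E(S)$, property \eqref{pro_dual_idemp} lets us collapse $(z\circ z^-)\circ z$ back to $z$ on the right side, and on the left side $a - z\circ z^- + c$ should simplify to $a + c$ after using that idempotents in a Clifford semigroup are central (and that $-(z\circ z^-)=z\circ z^-$). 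One has to be a little careful here because $+$ is only an inverse-semigroup operation, not a group operation, so "cancelling" $z\circ z^-$ requires the centrality/absorption properties of idempotents rather than genuine invertibility; but since $z\circ z^-$ is the identity of the maximal subgroup containing $z$ on the additive side too (the idempotents of $(S,+)$ and $(S,\circ)$ coincide), this is exactly what makes it work.

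For the converse $\eqref{condition_solu}\Rightarrow\eqref{abcz}$, I would follow the two-step route sketched in the statement's own neighbourhood: first establish the auxiliary identity $\left(-x\right)\circ z = z - x\circ z + z$ for all $x\in S$ (equation \eqref{eq:-x}), then iterate \eqref{condition_solu} twice to break $\left(a-b+c\right)\circ z$ into $a\circ z - z + (-b)\circ z - z + c\circ z$ and substitute \eqref{eq:-x} to telescope the middle $z - b\circ z + z$ surrounded by $-z$'s down to $-b\circ z$. To prove \eqref{eq:-x} itself, expand $x\circ x^-\circ z = (x - x)\circ z$ using \eqref{condition_solu} to get $x\circ z - z + (-x)\circ z$; then solve for $(-x)\circ z$ by adding appropriate terms on the left, using \eqref{pro_dual_idemp} to handle the idempotent $x\circ x^- = -x + x$ and recognising $x\circ x^-\circ z = z - x\circ z + z\circ z^- \cdot z$-type reductions.

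The main obstacle I anticipate is the bookkeeping around idempotents: because $(S,+)$ is a Clifford semigroup and not a group, every "move a term to the other side" step has to be justified by inserting the correct idempotent ($z\circ z^-$ or $x\circ x^-$ or their sums) and invoking \eqref{pro_dual_idemp} plus centrality, rather than by literal subtraction. Concretely, in the derivation of \eqref{eq:-x} one must recognise expressions like $x\circ z - z + x\circ z$ and simplify them using that $-x\circ z + x\circ z$ is the idempotent $(x\circ z)(x\circ z)^-$, which by \eqref{pro_dual_idemp} acts on $z$ in a controlled way. Once that one-variable identity is secured, the rest is a routine double application of \eqref{condition_solu}. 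No genuinely deep ingredient is needed beyond the weak-brace axioms and the Clifford structure; the content is purely the careful equivalence of a two-term and a three-term linearity condition in the absence of additive inverses in the naive sense.
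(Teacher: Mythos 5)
Your proposal follows the paper's argument essentially verbatim: the same specialisation $b=z\circ z^-$ in \eqref{abcz} for the forward direction, and for the converse the same auxiliary identity $\left(-x\right)\circ z=z-x\circ z+z$ extracted from $x\circ x^-\circ z=(x-x)\circ z$, followed by a double application of \eqref{condition_solu} and a telescoping of the middle term. The one imprecision is your claim that $a-z\circ z^-+c$ ``simplifies to $a+c$'': it equals $a+c+z\circ z^-$, and the extra idempotent is only absorbed after composing with $\circ\, z$ on the right (since the idempotent of $(a+c)\circ z$ lies below $z\circ z^-$); this is, however, exactly the bookkeeping the paper itself leaves implicit in calling that direction trivial.
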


\medskip

\begin{rem}\label{le:cond-sol-dual}
 Observe that if $(S, +, \circ)$ is a dual weak brace and $z \in S$, then \eqref{condition_solu} is also equivalent to the equality
\begin{align}\label{cond_zmeno}\tag{$D^{\prime}$}
      (a+b)\circ z= a \circ z+\left(z^-+b\right) \circ z,
      \end{align}
   for all $a,b \in S$. Indeed, if $b \in S$ and \eqref{condition_solu} is satisfied,
then 
$$
\left(z^-+b\right) \circ z=z^-\circ z-z+b \circ z=-z+b \circ z,
$$
   and so \eqref{cond_zmeno} holds.  
   Conversely, if \eqref{cond_zmeno} is satisfied we have that
   \begin{align*}
       \left(z^-+b\right) \circ z\underset{\eqref{pro_dual_idemp}}{=}-z+\left(z^-\circ z \right)\circ z+\left(z^-+b\right) \circ  z=-z+\left(z^-\circ z+b\right) \circ  z\underset{\eqref{pro_dual_idemp}}{=}-z+b \circ z,
   \end{align*}
   i.e., \eqref{condition_solu} holds.
\end{rem}

\medskip

In light of \cref{lemmaD}, we introduce the following set.
\begin{defin}
    Let $(S,+, \circ)$  be a dual weak brace. Then, we call the set
\begin{align*}
    \mathcal{D}_r(S)=\{z \in S \, \mid \, \forall \, a,b \in S \quad (a+b) \circ z=a\circ z-z+b \circ z\},
\end{align*}
\emph{(right) distributor} of $S$. 
 \end{defin}
\noindent It immediately follows by \eqref{pro_dual_idemp} that $\E(S) \subseteq \mathcal{D}_r(S)$.
\begin{rem}\label{two-sided}
    A dual weak brace  $(S,+, \circ)$ is two-sided if and only if $\mathcal{D}_r(S)=S$. 
\end{rem}

\medskip

We aim to show that the map $r_{z}$ is a solution in any dual brace if and only if $z \in \mathcal{D}_r(S)$. In the next section, we will deepen the algebraic structure of $\mathcal{D}_r(S)$. To prove the main result of this section, we need the following preliminary lemma.

\begin{lemma}\label{le:sigma-tau}
    Let $(S, +, \circ)$ be a dual weak brace and $z \in \mathcal{D}_r(S)$. Then, they hold:
    \begin{enumerate}
        \item \label{1lem}$\sigma^{z}_a\left(b\right)\circ\tau^{z}_b(a)= a\circ b\circ z \circ z^-$,     
       \item $\tau^{z}: (S, \circ) \to \Map(S)$ is an anti-homomorphism,
       \item $\sigma_a^{z}(b)=a \circ b \circ b^- \circ \left(a^- \circ z^- + b \right) \circ z$,
    \item \label{44} $\sigma_a^{z}(b) \circ \sigma_a^{z}(b)^-=a \circ a^-+ b \circ b^-+z \circ z^-$,
    \end{enumerate}
    for all $a, b \in S$.    
\begin{proof}
Let $a,b,c \in S$. Initially, by \eqref{star} and \cref{lemmaD}, we have that
$$\sigma^{z}_a\left(b\right)\circ\tau^{z}_b(a)
    =  z^-\circ\lambda_{z\circ a}\left(b\right)\circ\rho_b\left(z\circ a\right)
    = z^- \circ z\circ a \circ b.$$ \\Moreover,
  \begin{align*}
      \tau^{z}_{b\circ c}\left(a\right)
      &= z^-\circ\rho_{b\circ c}\left(z\circ a\right)
      = z^-\circ \rho_c\rho_b\left(z\circ a\right)
      = z^-\circ \rho_c\left(z\circ \tau^{z}_b\left(a\right)\right)=\tau^{z}_c\tau^{z}_b\left(a\right).
  \end{align*}
Furthermore, since, by \eqref{pro_dual_idemp}, $\lambda_a(b)=a \circ b \circ b^- \circ \left(a^-+b\right)$, we obtain
 $$\sigma_a^{z}(b)=  z^- \circ z\circ a \circ b \circ b^- \circ \left(a^- \circ z^- + b \right) \circ z= a \circ b \circ b^- \circ \left(a^- \circ z^- + b \right) \circ z.$$
  Finally, 
  \begin{align*}
   \sigma_a^{z}(b) \circ \sigma_a^{z}(b)^-&=z^-\circ\lambda_{z\circ a}\left(b\right)\circ \lambda_{z\circ a}\left(b\right)^-\circ z\\
   &=z^- \circ z\circ a \circ \left(a^-\circ z^-+b\right)\circ \left(a^-\circ z^-+b\right)^- \circ a^-\circ z^-\circ z\\
   &=a \circ a^- \circ \left(a^-\circ z^-+b-b-a^-\circ z^-\right) \circ z\circ z^- &\mbox{by \eqref{pro_dual_idemp}}\\
   &=a \circ a^-+b\circ b^-+z\circ z^- &\mbox{by \eqref{pro_dual_idemp}}
  \end{align*}
 which completes the proof.
 \end{proof}
\end{lemma}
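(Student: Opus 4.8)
The plan is to push every expression back to the form \eqref{star}, which is available precisely because $z\in\mathcal{D}_r(S)$ is equivalent, by \cref{lemmaD}, to \eqref{abcz}; concretely, $\sigma^z_a(b)=z^-\circ\lambda_{z\circ a}(b)\circ z$ and $\tau^z_b(a)=z^-\circ\rho_b(z\circ a)$. After that, the four statements reduce to a handful of already-established facts: the identity $\lambda_x(y)\circ\rho_y(x)=x\circ y$; that $\lambda$ is a $\circ$-homomorphism and $\rho$ a $\circ$-anti-homomorphism; the centrality of idempotents in the Clifford semigroup $(S,\circ)$, so that $z\circ z^-=z^-\circ z$ moves freely through products and is absorbed by any adjacent factor $z$ or $z^-$; and the elementary rule that in a Clifford semigroup the idempotent $w\circ w^-$ of a product $w$ equals the meet of the idempotents of its factors (used both in $(S,\circ)$ and in $(S,+)$). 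I would first record the auxiliary identity $\lambda_a(b)=a\circ b\circ b^-\circ(a^-+b)$: since $b\circ b^-$ is the idempotent of $b$ and sits above the idempotent of $a^-+b$, \eqref{pro_dual_idemp} gives $b\circ b^-\circ(a^-+b)=a^-+b$, whence $a\circ b\circ b^-\circ(a^-+b)=a\circ(a^-+b)=a\circ a^--a+a\circ b=-a+a\circ b=\lambda_a(b)$, using $a\circ a^-=-a+a$ and $(-a+a)-a=-a$.

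For the first assertion, write $\sigma^z_a(b)\circ\tau^z_b(a)=z^-\circ\lambda_{z\circ a}(b)\circ(z\circ z^-)\circ\rho_b(z\circ a)$, slide the central idempotent $z\circ z^-$ left to merge into $z^-$, and apply $\lambda_{z\circ a}(b)\circ\rho_b(z\circ a)=(z\circ a)\circ b$ to land on $z^-\circ z\circ a\circ b=a\circ b\circ z\circ z^-$ (the last equality by centrality of $z^-\circ z=z\circ z^-$). For the second assertion, the anti-homomorphism property of $\rho$ yields $\tau^z_{b\circ c}(a)=z^-\circ\rho_{b\circ c}(z\circ a)=z^-\circ\rho_c\bigl(\rho_b(z\circ a)\bigr)$; since $\rho_b(z\circ a)=\lambda_{z\circ a}(b)^-\circ(z\circ a)\circ b$ is a product, its idempotent lies below the idempotent $(z\circ a)\circ(z\circ a)^-=z\circ z^-\circ a\circ a^-$ of $z\circ a$, hence below $z\circ z^-$, so $z\circ z^-\circ\rho_b(z\circ a)=\rho_b(z\circ a)$, i.e.\ $\rho_b(z\circ a)=z\circ\tau^z_b(a)$; substituting gives $\tau^z_{b\circ c}(a)=z^-\circ\rho_c\bigl(z\circ\tau^z_b(a)\bigr)=\tau^z_c\bigl(\tau^z_b(a)\bigr)$, which is exactly the asserted anti-homomorphism.

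For the third assertion, insert the auxiliary identity with $a$ replaced by $z\circ a$ (so $(z\circ a)^-=a^-\circ z^-$) into \eqref{star}:
\[
\sigma^z_a(b)=z^-\circ\bigl(z\circ a\circ b\circ b^-\circ(a^-\circ z^-+b)\bigr)\circ z,
\]
and push the central idempotent $z^-\circ z$ rightward so that it is absorbed by the trailing $z$, leaving exactly $a\circ b\circ b^-\circ(a^-\circ z^-+b)\circ z$. For the fourth assertion, use the third: $\sigma^z_a(b)\circ\sigma^z_a(b)^-$ is the idempotent of $\sigma^z_a(b)=a\circ b\circ b^-\circ(a^-\circ z^-+b)\circ z$, hence the meet of the idempotents of the factors; the factor $a^-\circ z^-+b$ has idempotent $(a\circ a^-)\wedge(z\circ z^-)\wedge(b\circ b^-)$, so the whole meet collapses to $(a\circ a^-)\wedge(b\circ b^-)\wedge(z\circ z^-)$, which equals $a\circ a^-+b\circ b^-+z\circ z^-$ by \eqref{pro_dual_idemp}. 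One may equivalently stay inside \eqref{star}, expanding $\lambda_{z\circ a}(b)\circ\lambda_{z\circ a}(b)^-$ and conjugating the resulting additive idempotent past $a$ and $a^-$ via \eqref{pro_dual_idemp}, as the authors do.

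The only delicate points are bookkeeping: in the second assertion, justifying that $z\circ z^-$ acts as an identity on $\rho_b(z\circ a)$, i.e.\ that the idempotent of this element lies below $z\circ z^-$; and in the fourth assertion, computing correctly the idempotent obtained after conjugation, namely the additive idempotent of $a^-\circ z^-+b$, by repeatedly using centrality of idempotents in $(S,+)$. Both come down to the same routine meet-of-idempotents computation in a Clifford semigroup, so there is no genuine obstacle beyond keeping track of the semilattice order.
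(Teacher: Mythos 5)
Your proposal is correct and follows essentially the same route as the paper: everything is reduced to the form \eqref{star}, then parts 1--3 use $\lambda_{x}(y)\circ\rho_{y}(x)=x\circ y$, the anti-homomorphism property of $\rho$, and the identity $\lambda_a(b)=a\circ b\circ b^-\circ(a^-+b)$ exactly as in the paper's argument (you in fact supply more detail, e.g.\ in justifying $z\circ z^-\circ\rho_b(z\circ a)=\rho_b(z\circ a)$). The only divergence is in part 4, where you compute the idempotent of $\sigma^z_a(b)$ as the meet of the idempotents of its factors instead of expanding $\lambda_{z\circ a}(b)\circ\lambda_{z\circ a}(b)^-$ directly, but this is an equivalent rearrangement of the same Clifford-semigroup computation.
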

\medskip

\begin{rem}\label{rem_sol}
 Given a set $X$, a function $r:S \times S\to S\times S, \  (a,b)\mapsto(\sigma_a(b),\tau_b(a))$  is a solution if and only if the following three  equalities hold
\begin{align}
     &\label{first} \sigma_a\sigma_b(c)=\sigma_{\sigma_a\left(b\right)}\sigma_{\tau_b\left(a\right)}\left(c\right)\tag{Y1},\\
    &  \label{second}\sigma_{\tau_{\sigma_b\left(c\right)}\left(a\right)}\tau_c\left(b\right)=\tau_{\sigma_{\tau_b\left(a\right)}\left(c\right)}\sigma_a\left(b\right)\tag{Y2},\\
      &\label{third}\tau_c\tau_b(a)=\tau_{\tau_c\left(b\right)}\tau_{\sigma_b\left(c\right)}\left(a\right)\tag{Y3},
  \end{align} 
for all $a,b,c \in S$. 
\end{rem}

\medskip

\begin{theor}\label{th: deform_dualweak}
Let $S$ be a dual weak brace and $z \in S$. Then, the map $r_{z}:S \times S \to S \times S$ given by
\begin{align*}
       r_{z}(a,b)=\left(-a\circ z+a\circ b\circ z, \left(-a\circ z+a\circ b\circ z\right)^{-} \circ a \circ b \right),
    \end{align*}
    for all $a,b \in S$,
is a solution if and only if $z \in \mathcal{D}_r(S)$. We call such a map $r_z$ \emph{solution associated to $S$ deformed by $z$}.
\begin{proof}
Let $a,b,c \in S$. To prove \eqref{first}, we observe that
\begin{align*}
  \sigma^{z}_a\sigma^{z}_b(c)
  &=-a \circ z +\left(a-a\circ b \circ z+a \circ b\circ c \circ z\right) \circ z&\mbox{by \cref{lemma_dual_weak}-\ref{lem:5}}\\
  &=-a \circ z+a \circ z -z+\left(-a\circ b \circ z+a \circ b\circ c \circ z\right) \circ z&\mbox{$z\in\mathcal{D}_r(S)$}\\
  &=a\circ a^--z +\sigma^{z}_{a \circ b}(c) \circ z &\mbox{by \eqref{pro_dual_idemp}}\\
  &=a\circ a^-+b \circ b^--z +\sigma^{z}_{a \circ b}(c)\circ z&\mbox{by \eqref{pro_dual_idemp}}
   \end{align*} 
   and 
\begin{align*}
&\sigma^{z}_{\sigma^{z}_a\left(b\right)}\sigma^{z}_{\tau^{z}_b\left(a\right)}\left(c\right)\\
&=-\sigma^{z}_a\left(b\right) \circ z+ \left(\sigma^{z}_a\left(b\right)-a\circ b\circ z+a \circ b \circ c \circ z\right) \circ z &\mbox{by \cref{lemma_dual_weak}-\ref{lem:5} \ \& \ \cref{le:sigma-tau}-\ref{lem:1}}\\
&=-\sigma^{z}_a\left(b\right) \circ z+\sigma^{z}_a\left(b\right) \circ z -z +\sigma^{z}_{a \circ b}(c)\circ z &\mbox{$z \in \mathcal{D}_r(S)$}\\
&=\sigma^{z}_a\left(b\right) \circ \sigma^{z}_a\left(b\right)^- -z+\sigma^{z}_{a \circ b}(c) \circ z&\mbox{by \eqref{pro_dual_idemp}}\\
&= a\circ a^-+b \circ b^--z +\sigma^{z}_{a \circ b}(c)\circ z&\mbox{by \cref{le:sigma-tau}-\ref{44}}
\end{align*}  
Besides, \eqref{third} follows by
  \begin{align*}    \tau^{z}_{\tau^{z}_c\left(b\right)}\tau^{z}_{\sigma^{z}_b\left(c\right)}\left(a\right)
      &= \tau^{z}_{\sigma^z_b\left(c\right)\circ\tau^z_c\left(b\right)}\left(a\right)&\mbox{by \cref{le:sigma-tau}-\ref{lem:2}}\\
      &= \tau^{z}_{z^-\circ z\circ b\circ c}\left(a\right)&\mbox{by \cref{le:sigma-tau}-\ref{lem:1}}\\
      &= z^-\circ\rho_{b\circ c}\rho_{z^-\circ z}\left(z\circ a\right)\\
      &= z^-\circ\rho_{b\circ c}\left( z\circ a\right)&\mbox{by \eqref{pro_dual_idemp}}\\
      &= z^-\circ\rho_{c}\rho_{b}\left(z\circ a\right)\\
      &= z^-\circ\rho_{c}\left(z\circ \tau^{z}_b\left(a\right)\right)\\
      &= \tau^{z}_c\tau^{z}_b\left(a\right). \end{align*}
Furthermore,
\begin{align*}
    &\sigma^{z}_{\tau^{z}_{\sigma^{z}_b\left(c\right)}\left(a\right)}\tau^{z}_c\left(b\right)\\
    &=\tau^{z}_{\sigma^{z}_b\left(c\right)}\left(a\right) \circ \tau^{z}_c(b) \circ \tau^{z}_c(b)^-\circ \left(\tau^{z}_{\sigma^{z}_b\left(c\right)}\left(a\right)^- \circ z^- + \tau^{z}_c(b) \right) \circ z &\mbox{by \cref{le:sigma-tau}-\ref{lem:3}}\\
    &=\left(\sigma^{z}_a\sigma^{z}_b(c)\right)^-\circ a \circ \sigma^{z}_b(c) \circ \tau^{z}_c(b) \circ \left(\tau^{z}_{\tau^{z}_c\left(b\right)}\tau^{z}_{\sigma^{z}_b\left(c\right)}\left(a\right) \right)^-&\mbox{by \cref{le:sigma-tau}-\ref{lem:3}}\\
    &=\left(\sigma^{z}_{\sigma^{z}_a\left(b\right)}\sigma^{z}_{\tau^{z}_b\left(a\right)}\left(c\right)\right)^- \circ a \circ \sigma^{z}_b(c) \circ \tau^{z}_c(b)  \circ \left(\tau^{z}_c\tau^{z}_b\left(a\right)\right)^- &\mbox{by (Y1)-(Y3)}\\
    &=\left(\sigma^{z}_{\sigma^{z}_a\left(b\right)}\sigma^{z}_{\tau^{z}_b\left(a\right)}\left(c\right)\right)^- \circ a \circ b \circ c \circ z^- \circ z \circ \left(\tau^{z}_c\tau^{z}_b\left(a\right)\right)^- &\mbox{by \cref{le:sigma-tau}-\ref{lem:1}}\\
    &=\left(\sigma^{z}_{\sigma^{z}_a\left(b\right)}\sigma^{z}_{\tau^{z}_b\left(a\right)}\left(c\right)\right)^- \circ \sigma^{z}_a(b)\circ \tau^{z}_b(a) \circ c \circ  \left(\tau^{z}_c\tau^{z}_b\left(a\right)\right)^- &\mbox{by \cref{le:sigma-tau}-\ref{lem:1}}\\
    &=\left(\sigma^{z}_{\sigma^{z}_a\left(b\right)}\sigma^{z}_{\tau^{z}_b\left(a\right)}\left(c\right)\right)^- \circ \sigma^{z}_a(b)\circ \tau^{z}_b(a) \circ c \circ  c^- \circ \left(\tau^{z}_b(a)^- \circ z^-+c\right) \circ z\\
    &=\left(\sigma^{z}_{\sigma^{z}_a\left(b\right)}\sigma^{z}_{\tau^{z}_b\left(a\right)}\left(c\right)\right)^- \circ \sigma^{z}_a(b)\circ \sigma^{z}_{\tau^{z}_b(a)}(c) &\mbox{by \cref{le:sigma-tau}-\ref{lem:3}}\\
    &=\tau^{z}_{\sigma^{z}_{\tau^{z}_b\left(a\right)}\left(c\right)}\sigma^{z}_a\left(b\right),  
\end{align*}
i.e., \eqref{second} holds.
Therefore, $r_{z}$ is a solution. Finally, we show that \eqref{first} implies \eqref{condition_solu}, namely $z \in \mathcal{D}_r(S)$. Indeed, if $a,b \in S$, by  choosing $x= a$, $y=a^-\circ a$, and $w= a^-\circ (a \circ z+b)\circ z^-$,  we obtain
\begin{align*}
\sigma^{z}_x\sigma^{z}_y(w)
  &=-x \circ z+x \circ (-y \circ z+y \circ w \circ z) \circ z\\
  &=-a\circ z +a \circ \left(-z + a \circ a^- + a^- \circ (a \circ z+b)\circ z^- \circ z\right) \circ z   &\mbox{by \eqref{pro_dual_idemp}}\\
  &= -a \circ z +a \circ \left(-z+a^- \circ a \circ z - a^-+a^-\circ b\right) \circ z &\mbox{by \eqref{pro_dual_idemp}}\\
  &=-a \circ z + a \circ \lambda_{a^-}(b) \circ z &\mbox{by \eqref{pro_dual_idemp}}\\
  &=-a \circ z + (a+b) \circ z &\mbox{by \cref{lemma_dual_weak}-\ref{lem:2}}
\end{align*}
and, using \cref{le:sigma-tau}-\ref{1lem}, \cref{lemma_dual_weak}-\ref{lem:5}, and \eqref{pro_dual_idemp}, we get
\begin{align*}
&\sigma^{z}_{\sigma^{z}_x\left(y\right)}\sigma^{z}_{\tau^{z}_y\left(x\right)}\left(w\right)\\
    &= -\sigma^{z}_x\left(y\right) \circ z+ \left(\sigma^{z}_x\left(y\right)-x\circ y\circ z+x \circ y \circ w \circ z\right) \circ z\\
    &= -\left(-a\circ z+a\circ z\right)\circ z + \left(\left(-a\circ z+a\circ z\right)-a\circ z+a\circ  a^-\circ (a \circ z+b)\circ z^- \circ z \right) \circ z\\
    &=-\left(a \circ a^-\circ z \circ z^-\right)\circ z + \left(-a\circ z + a\circ z - a\circ a^- +a \circ a^-\circ b\right) \circ z\\
    &=-z+\left(a \circ a^-\circ z \circ z^-+b \right) \circ z\\
    &=-z+a\circ a^-+b \circ z.
    \end{align*}
Hence,  by \eqref{first} it follows that
\begin{align}\label{key}
    -a\circ z+(a+b) \circ z=-z+a\circ a^-+b \circ z
\end{align}
and so, by \eqref{pro_dual_idemp}, we can write
\begin{align*}
    (a+b) \circ z=a\circ z-a\circ z+(a+b)\circ z \underset{\eqref{key}}{=}a\circ z-z+a\circ a^-+b \circ z=a\circ z-z+b \circ z,
\end{align*}
i.e., $z \in \mathcal{D}_r(S)$. Therefore, we get the claim.
\end{proof}
\end{theor}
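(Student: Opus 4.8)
The plan is to invoke \cref{rem_sol} and thereby reduce the statement to the three fundamental equalities \eqref{first}, \eqref{second}, \eqref{third} for the component maps $\sigma^z_a,\tau^z_b$ of $r_z$. For the ``if'' direction I would assume $z\in\mathcal{D}_r(S)$ and verify all three; for the ``only if'' direction I would assume $r_z$ is a solution, hence in particular that \eqref{first} holds, and specialise it to extract \eqref{condition_solu}, which by \cref{lemmaD} is exactly the defining condition of $\mathcal{D}_r(S)$.

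Before touching the three equalities I would set up the toolkit: the rewriting \eqref{star} of $r_z$ in terms of $\lambda$ and $\rho$, together with the four identities of \cref{le:sigma-tau}, especially $\sigma^z_a(b)\circ\tau^z_b(a)=a\circ b\circ z\circ z^-$, the anti-homomorphism property of $\tau^z$, the $\lambda$-style factorisation $\sigma^z_a(b)=a\circ b\circ b^-\circ(a^-\circ z^-+b)\circ z$, and $\sigma^z_a(b)\circ\sigma^z_a(b)^-=a\circ a^-+b\circ b^-+z\circ z^-$. These are precisely the identities that let one push the parameter $z$ across an addition, which is the only place the hypothesis $z\in\mathcal{D}_r(S)$ is actually used. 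With them in hand, \eqref{third} is quick: using that $\tau^z$ is an anti-homomorphism and that $\rho_{z^-\circ z}$ fixes $z\circ a$ by \eqref{pro_dual_idemp}, both sides collapse to $\tau^z_c\tau^z_b(a)$. Equality \eqref{first} is where $z\in\mathcal{D}_r(S)$ enters head-on: expanding $\sigma^z_a\sigma^z_b(c)$ via \cref{lemma_dual_weak}-\ref{lem:5} and pulling $z$ through the resulting sum by \eqref{condition_solu} yields $a\circ a^-+b\circ b^--z+\sigma^z_{a\circ b}(c)\circ z$; doing the same to the right-hand side of \eqref{first} and then applying \cref{le:sigma-tau}-\ref{44} produces the identical expression.

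The main obstacle is \eqref{second}, which I would prove last, once \eqref{first} and \eqref{third} are available. The strategy is to rewrite the two $\sigma^z$-factors on each side by means of the factorisation $\sigma^z_a(b)=a\circ b\circ b^-\circ(a^-\circ z^-+b)\circ z$ from \cref{le:sigma-tau} and the identity $\lambda_a(b)=a\circ b\circ\rho_b(a)^-$ of \cref{lemma_dual_weak}, then substitute the already-established \eqref{first} and \eqref{third} so that the long composition telescopes down to $\bigl(\sigma^z_{\sigma^z_a(b)}\sigma^z_{\tau^z_b(a)}(c)\bigr)^-\circ\sigma^z_a(b)\circ\sigma^z_{\tau^z_b(a)}(c)$, which is exactly $\tau^z_{\sigma^z_{\tau^z_b(a)}(c)}\sigma^z_a(b)$. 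This is a long but essentially mechanical chain of substitutions; the delicate part is choosing the intermediate normal form carefully enough that \eqref{first} and \eqref{third} can be applied at the right moment, and keeping track of the idempotent tails $z\circ z^-$, $b\circ b^-$, $c\circ c^-$ and absorbing them via \eqref{pro_dual_idemp}.

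For the converse, assuming $r_z$ is a solution I would feed \eqref{first} the specific inputs $x=a$, $y=a^-\circ a$, $w=a^-\circ(a\circ z+b)\circ z^-$ built from an arbitrary pair $a,b\in S$. A direct computation with \eqref{pro_dual_idemp} and \cref{lemma_dual_weak}-\ref{lem:2} gives $\sigma^z_x\sigma^z_y(w)=-a\circ z+(a+b)\circ z$, while the right-hand side of \eqref{first}, evaluated with \cref{le:sigma-tau}-\ref{1lem}, \cref{lemma_dual_weak}-\ref{lem:5} and \eqref{pro_dual_idemp}, gives $-z+a\circ a^-+b\circ z$. Equating these, adding $a\circ z$ on the left and simplifying once more with \eqref{pro_dual_idemp}, yields $(a+b)\circ z=a\circ z-z+b\circ z$ for all $a,b\in S$, i.e.\ $z\in\mathcal{D}_r(S)$, completing the proof.
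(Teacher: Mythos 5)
Your proposal is correct and follows essentially the same route as the paper's own proof: the same reduction via \cref{rem_sol}, the same use of \cref{le:sigma-tau} to establish \eqref{first} and \eqref{third} first and then telescope \eqref{second} down to $\bigl(\sigma^{z}_{\sigma^{z}_a(b)}\sigma^{z}_{\tau^{z}_b(a)}(c)\bigr)^-\circ\sigma^{z}_a(b)\circ\sigma^{z}_{\tau^{z}_b(a)}(c)$, and the identical specialisation $x=a$, $y=a^-\circ a$, $w=a^-\circ(a\circ z+b)\circ z^-$ for the converse. No substantive differences to report.
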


\medskip


The following theorem illustrates some properties of a deformed solution $r_z$ on a dual weak brace $(S,+,\circ)$. In particular, $r_z$ has a behavior close to bijectivity and non-degeneracy since it is completely regular in $\Map(S\times S)$.

\begin{theor}\label{theor_rropr}
  Let $(S,+, \circ)$ be a dual weak brace, $z \in \mathcal{D}_r(S)$, and $r_z$ the solution associated to $S$ deformed by $z$. Then, considered the map $\check{r}_{z^-}: S  \times S \to S \times S$ given by
    \begin{align*}
     \check{r}_{z^-}(a,b)=\left(a\circ b-a\circ z^-+z^-, \ \left(a\circ b-a\circ z^-+z^-\right)^{-} \circ a \circ b \right),
    \end{align*}
    for all $a,b \in S$,  the following hold 
  \begin{align*}
      r_{z}\, \check{r}_{z^-}\, r_{z} = r_{z}, \qquad
      \check{r}_{z^-}\, r_{z}\, \check{r}_{z^-}= \check{r}_{z^-}, \qquad \text{and}\qquad r_{z}\check{r}_{z^-} = \check{r}_{z^-}r_{z}.
  \end{align*}
  Moreover, $\sigma^{z}_a$ and $\tau_a^{z}$ are  completely regular elements in $\Map(S)$, since
  \begin{align*}
\sigma_a^{z}\sigma_{a^-}^{z^-}\sigma_a^{z}
    = \sigma_a^{z},
    \qquad   \sigma_{a^-}^{z^-}\sigma_a^{z}\sigma_{a^-}^{z^-}
    = \sigma_{a^-}^{z^-}, \qquad &\&\qquad
   \sigma_{a}^{z}\sigma_{a^-}^{z^-}
    =\sigma_{a^-}^{z^-}\sigma_{a}^{z} \\ \tau_{a}^{z}\tau^{z}_{a^-}\tau_{a}^{z} = \tau_{a}^{z} 
    , 
    \qquad
    \tau^{z}_{a^-}\tau_{a}^{z}\tau^{z}_{a^-}
    = \tau^{z}_{a^-},
    \qquad &\&\qquad
    \tau^{z}_a \tau_{a^-}^{z}
    = \tau_{a^-}^{z}\tau_a^z, 
\end{align*}
for every $a \in S$. 
  \begin{proof}
  In this proof, we will set for brevity $x^0:=x \circ x^-=x-x$, for every $x \in S$.\\
Initially, if $a,b\in S$,  the first component of \ $r_{z}\check{r}_{z^-}(a,b)$ is equal to
\begin{align*}
    &X:=-\left(a\circ b - a\circ z^- + z^-\right)\circ z + \left(a\circ b - a\circ z^- + z^-\right)^0\circ a\circ b\circ z\\
    &= - \left(a\circ b\circ z - z + z-a\circ z^0 \right) +  \left(a\circ b - a\circ z^- + z^-\right)^0+ a\circ b\circ z &\mbox{by \cref{lemmaD}}\\
    &=a+z^0-a\circ b \circ z+(a\circ b-a \circ z^-+z^0+a\circ z^--a\circ b)+a\circ b \circ z &\mbox{by \eqref{pro_dual_idemp}}\\
    &= a-a\circ b \circ z+(a\circ b)^0+(a\circ z)^0+a\circ b \circ z&\mbox{by \eqref{pro_dual_idemp}}\\
    &=a + b^0 + z^0 &\mbox{by \eqref{pro_dual_idemp}}
\end{align*}
and the first component of  \ $\check{r}_{z^-}r_{z}(a,b)$ is equal to
\begin{align*}
    &Y:=\left(-a\circ z + a\circ b\circ z\right)^0\circ a\circ b - \left(- a\circ z + a\circ b\circ z\right)\circ z^- + z^-\\
    &= \left(-a\circ z + a\circ b\circ z\right)^0 + a\circ b- \left(z^- - a\circ z\circ z^- + a\circ b\right) \circ  z^0 +z^- &\mbox{by \eqref{condition_solu}-\eqref{eq:-x}}\\
    &= (-a\circ z + a\circ b\circ z-a\circ b\circ z+a\circ z)+(a\circ b)^0+a+z^0 &\mbox{by \eqref{pro_dual_idemp}}\\
    &=a + b^0 + z^0 &\mbox{by \eqref{pro_dual_idemp}}
\end{align*}
thus they are equal. Since the second components of $r_{z}\check{r}_{z^-}(a,b)$ and $\check{r}_{z^-}r_{z}(a,b)$ are equal to $X^-\circ a \circ b$ and $Y^- \circ a \circ b$, respectively, and $X = Y$, it follows that $r_{z}\check{r}_{z^-}=\check{r}_{z^-}r_{z}$. Moreover, by the previous paragraph and by \eqref{pro_dual_idemp}, we compute
\begin{align*}
   &r_z\, \check{r}_{z^-}\, r_{z}(a,b)
   = r_{z}\left(Y, Y^- \circ a \circ b\right)\\ 
   &=\left(-Y \circ z + Y \circ Y^-\circ a \circ b \circ z, (-Y \circ z + Y \circ Y^-\circ a \circ b \circ z)^-\circ Y\circ Y^-\circ a\circ b\right)\\ 
   &=r_{z}(a,b)
\end{align*}
and, by \eqref{pro_dual_idemp},
\begin{align*}
  &\check{r}_{z^-}\, r_{z}\, \check{r}_{z^-}(a,b)
  = \check{r}_{z^-}\left(X, X^-\circ a \circ b\right)\\
  &=\left(X \circ X^-\circ a \circ b - X \circ z^-+z^-, \left(X \circ X^-\circ a \circ b - X \circ z^-+z^-\right)^- \circ X \circ X^- \circ a \circ b\right)\\
  &= \check{r}_{z^-}(a,b).
\end{align*}
Furthermore,  it is easy to check that $\sigma_a^{z}\sigma_{a^-}^{z^-}(b)=\sigma_{a^-}^{z^-}\sigma_{a}^{z}(b)=a^0\circ z^0\circ b$ and so they follow $\sigma_a^{z}\sigma_{a^-}^{z^-}\sigma_a^{z}(b)=\sigma_a^{z}(b)$ and $\sigma_{a^-}^{z^-}\sigma_a^{z}\sigma_{a^-}^{z^-}=\sigma_{a^-}^{z^-}(b)$. The rest of the claim is a consequence of \cref{le:sigma-tau}-\ref{lem:2}.
\end{proof}
\end{theor}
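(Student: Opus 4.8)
The plan is to verify the three ``completely regular'' identities for the pair $(r_z,\check r_{z^-})$ by a direct computation of first components, exactly as one does for $r$ and $r^{op}$, and then to deduce the statements about $\sigma^z_a$ and $\tau^z_a$ from the factorisation identity of \cref{le:sigma-tau}. The key observation that makes everything run is that $r_z$ and $\check r_{z^-}$ agree with $r$ and $r^{op}$ up to conjugation by $z$: using \eqref{star} we have $r_z(a,b) = (z^-\circ\lambda_{z\circ a}(b)\circ z,\ z^-\circ\rho_b(z\circ a))$, and a similar rewriting of $\check r_{z^-}$ in terms of $\lambda^{op},\rho^{op}$. Since $r\,r^{op}\,r=r$, $r^{op}\,r\,r^{op}=r^{op}$ and $rr^{op}=r^{op}r$ hold in any weak brace, one expects the conjugated versions to satisfy the conjugated identities; the only thing to check is that the ``$z$-twist'' is compatible, which is precisely what membership $z\in\mathcal D_r(S)$, i.e. \eqref{condition_solu}, and its consequence \eqref{eq:-x} guarantee.

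Concretely, first I would compute the common first component $X=Y=a+b^0+z^0$ of $r_z\check r_{z^-}(a,b)$ and $\check r_{z^-}r_z(a,b)$ (writing $x^0:=x\circ x^-=x-x$), pushing the outer $z$ (resp. $z^-$) inside using \cref{lemmaD} and \eqref{eq:-x}, and collapsing the resulting idempotent-laden expression with repeated use of \eqref{pro_dual_idemp} and the fact that idempotents are central. Since the second components of both maps are obtained from the first component $W$ by the universal formula $W^-\circ a\circ b$, equality of first components forces equality of the maps, giving $r_z\check r_{z^-}=\check r_{z^-}r_z$. Then, feeding $Y$ (resp.\ $X$) back into $r_z$ (resp.\ $\check r_{z^-}$) and simplifying $Y\circ Y^-\circ a\circ b$ again by \eqref{pro_dual_idemp}, I obtain $r_z\check r_{z^-}r_z=r_z$ and $\check r_{z^-}r_z\check r_{z^-}=\check r_{z^-}$.

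For the maps $\sigma^z_a,\tau^z_a$, I would first check directly that $\sigma^z_a\sigma^{z^-}_{a^-}(b)=\sigma^{z^-}_{a^-}\sigma^z_a(b)=a^0\circ z^0\circ b$ — an easy computation of the same flavour, using $\mathcal{D}_r$-distributivity to move the twist past the sum and \eqref{pro_dual_idemp} to absorb idempotents — from which the three $\sigma$-identities ($\sigma^z_a\sigma^{z^-}_{a^-}\sigma^z_a=\sigma^z_a$, etc.) follow formally because $a^0\circ z^0\circ(-)$ is idempotent and $\sigma^z_a$, $\sigma^{z^-}_{a^-}$ each fix it. The corresponding $\tau$-identities then come for free: by \cref{le:sigma-tau}\ref{1lem} one has $\sigma^z_a(b)\circ\tau^z_b(a)=a\circ b\circ z\circ z^-$, and since $\tau^z$ is an anti-homomorphism $(S,\circ)\to\Map(S)$ while $\sigma^z$ is a homomorphism, the completely-regular relations for the $\tau$'s are conjugate/adjoint forms of those for the $\sigma$'s — this is what the closing sentence ``a consequence of \cref{le:sigma-tau}-\ref{lem:2}'' refers to.

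The main obstacle is purely computational: correctly manoeuvring the outer factor $z$ (or $z^-$) through the nested expressions $-a\circ z+a\circ b\circ z$ and $a\circ b-a\circ z^-+z^-$. Here one must use \eqref{condition_solu} in the forms $(-x)\circ z=z-x\circ z+z$ (that is \eqref{eq:-x}) and $(a-b+c)\circ z=a\circ z-b\circ z+c\circ z$, and be disciplined about which symbols are idempotents so that \eqref{pro_dual_idemp} and centrality of $\E(S)$ can be applied to cancel them in the right order. Once those bookkeeping steps are organised, no genuinely new idea beyond \cref{lemmaD}, \cref{le:sigma-tau}, and the already-known regularity of $r$ is needed.
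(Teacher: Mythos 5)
Your proposal follows essentially the same route as the paper's own proof: computing the common first component $X=Y=a+b^0+z^0$ via \cref{lemmaD}, \eqref{eq:-x} and \eqref{pro_dual_idemp}, deducing commutativity from the shared second-component formula $W^-\circ a\circ b$, feeding $X$ and $Y$ back in for the two regularity identities, and obtaining the $\sigma$- and $\tau$-statements from $\sigma_a^{z}\sigma_{a^-}^{z^-}(b)=a^0\circ z^0\circ b$ and the anti-homomorphism property of $\tau^z$. The argument is correct and no genuinely different idea is involved.
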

\medskip

Observe that if $(B,+, \circ)$ is a skew brace and $a, z\in B$ are such that $a \circ z = z + a$, then $\sigma^z_a = \lambda_a$ (cf. \cite[Lemma 2.10]{DoRy22x}). This is equivalent to requiring that the map $\sigma^{z}$ is a homomorphism, as we show next in the more general case.
\begin{prop}\label{prop_omo}
    Let $(S, +, \circ)$ be a dual weak brace and $z\in S$. Then, $\sigma^{z}: \left(S,\circ\right)\to \Map(S)$ is a homomorphism if and only if $a \circ z = z+a$, for every $a \in S$. 
    \begin{proof}
If $a \in S$ and $\sigma^{z}$ is a homomorphism, by $\sigma^z_{z^-}\sigma^z_{z^-\circ z}(a)=\sigma^z_{z^- \circ z^-\circ z}(a)$, using \eqref{pro_dual_idemp}, we get
$
z^-\circ(-z+a\circ z) \circ z=
z^- \circ a \circ z$.
Thus,  by the last identity and \eqref{pro_dual_idemp}, we obtain 
\begin{align*}
    z+a &=z+z\circ \left(z^-\circ a \circ z\right) \circ z^-=z+z\circ z^-\circ(-z+a\circ z) \circ z\circ z^-\\
    &=z-z+a \circ z=a \circ z.
\end{align*}
Conversely, if $a,b \in S$, we have
\begin{align*}
    \sigma_a^z\sigma_b^z(c)
    &= - a - z + z + a\circ \left(- b - z + z + b\circ c\right)\\
    &= - a - z +z + a  -a\circ b + a -a + a\circ b\circ c&\mbox{by \eqref{pro_dual_idemp}}\\
    &= - a\circ b\circ z + a\circ b\circ c\circ z\\
   &= \sigma_{a\circ b}^z(c).
\end{align*}
Therefore, the claim follows.
    \end{proof}
\end{prop}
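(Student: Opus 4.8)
The plan is to unravel the statement ``$\sigma^{z}$ is a homomorphism'' into the family of identities $\sigma^{z}_{a\circ b}=\sigma^{z}_{a}\,\sigma^{z}_{b}$, for all $a,b\in S$, and to treat the two implications separately. The converse is a routine distributivity computation, while the forward direction is the delicate one, since there one has no freedom beyond those identities themselves.

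For the converse direction, assume $a\circ z=z+a$ for every $a\in S$. Fix $a,b,c\in S$, substitute $\sigma^{z}_{b}(c)=-b\circ z+b\circ c\circ z$ into $\sigma^{z}_{a}\bigl(\sigma^{z}_{b}(c)\bigr)$, and expand using the left distributivity $a\circ(x+y)=a\circ x-a+a\circ y$ of the weak brace together with \cref{lemma_dual_weak}-\ref{lem:5}. Every factor of the form $x\circ z$ is replaced by $z+x$ via the hypothesis, and the stray idempotents $a\circ a^{-}$ and $z\circ z^{-}$ that appear are absorbed by \eqref{pro_dual_idemp}. After cancellation the expression collapses to $-a\circ b\circ z+a\circ b\circ c\circ z=\sigma^{z}_{a\circ b}(c)$, which is precisely the homomorphism identity; the only care needed is bookkeeping of additive inverses and term order, since $(S,+)$ is merely a Clifford semigroup.

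For the forward direction I would exploit idempotents, as the homomorphism identity is all that is available. Evaluate it at the pair $\bigl(z^{-},\,z^{-}\circ z\bigr)$. Since $z^{-}\circ z\in\E(S)$ is central in the Clifford semigroup $(S,\circ)$, one has $z^{-}\circ(z^{-}\circ z)=z^{-}\circ(z\circ z^{-})=(z^{-}\circ z)\circ z^{-}=z^{-}$, so the right-hand side reduces to $\sigma^{z}_{z^{-}}(a)$. Expanding the left-hand side $\sigma^{z}_{z^{-}}\sigma^{z}_{z^{-}\circ z}(a)$ with \eqref{pro_dual_idemp} (and the auxiliary identities $(z^{-}\circ z)\circ z=z$ and $z^{-}\circ(z^{-}\circ z)=z^{-}$, possibly via the rewriting \eqref{star}) yields, for every $a\in S$, the ``conjugated'' relation $z^{-}\circ(-z+a\circ z)\circ z=z^{-}\circ a\circ z$. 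To finish, undo the conjugation: for any $a\in S$,
\[
z+a=z+z\circ(z^{-}\circ a\circ z)\circ z^{-}=z+z\circ\bigl(z^{-}\circ(-z+a\circ z)\circ z\bigr)\circ z^{-}=z-z+a\circ z=a\circ z,
\]
where the first equality uses $z\circ(z^{-}\circ a\circ z)\circ z^{-}=(z^{-}\circ z)\circ a=(z^{-}\circ z)+a$ together with $z+(z^{-}\circ z)=z$, the second plugs in the conjugated relation, and the last uses centrality of the idempotents and \eqref{pro_dual_idemp}. Hence $a\circ z=z+a$ for all $a\in S$, as claimed.

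The main obstacle I anticipate is in the forward direction: choosing the test pair $(z^{-},\,z^{-}\circ z)$ so that the composite genuinely degenerates, and then carrying out the ``conjugation by $z$'' argument cleanly in a setting where $(S,\circ)$ is a Clifford semigroup rather than a group. One must repeatedly invoke that the idempotent $z\circ z^{-}=z^{-}\circ z$ is central and dominates, in the common semilattice of $(S,+)$ and $(S,\circ)$, the idempotents of $z$, of $a$ and of $a\circ z$; this is exactly what makes the stray idempotents vanish via \eqref{pro_dual_idemp}. The converse implication, by contrast, is a mechanical computation.
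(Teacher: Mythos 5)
Your proposal is correct and follows essentially the same route as the paper's own proof: the converse is the same distributivity expansion using \cref{lemma_dual_weak}-\ref{lem:5} and \eqref{pro_dual_idemp}, and the forward direction uses exactly the test pair $(z^-,\,z^-\circ z)$ to extract the conjugated relation $z^-\circ(-z+a\circ z)\circ z=z^-\circ a\circ z$ and then undoes the conjugation. (Only a minor quibble with your closing remarks: the idempotent $z\circ z^-$ need not dominate the idempotent of $a$ in the semilattice, only that of $a\circ z$; but the computations you actually carry out never rely on the stronger claim.)
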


\noindent In the case of a dual weak brace $S$, even if $\sigma^{z}: \left(S,\circ\right)\to \Map(S)$ is a homomorphism, in general, $\sigma_a^z$ does not coincide with $\lambda_a$, since $\sigma_a^z(b)=\lambda_a(b)+z\circ z^-$. In the study of deformed solutions, the following question arises.
\begin{que}\label{que-equiv-sol}
    Let $(S,+, \circ)$ be a dual weak brace. For which parameters $z,w\in S$, are the deformed solutions $r_z$ and $r_w$ 
    equivalent?
\end{que}
We recall that two solutions $r$ and $s$ on two sets $S$ and $T$, respectively, are said to be \emph{equivalent} if there exists a bijective map $\varphi:S \to T$ such that $(\varphi \times \varphi)r = s(\varphi \times \varphi)$ (see \cite{ESS99}).
In the direction of \cref{que-equiv-sol}, in the context of skew braces, in \cite[Example 2.14]{DoRy22x} and \cite[Example 2.15]{DoRy22x} one can find instances of different parameters which give rise to non-equivalent deformed solutions.  Here, the following example shows that in the case of dual weak braces,  even deformed solutions by idempotents are not equivalent in general. 
\begin{ex}
Let $X= \{e, x, y\}$ and $\left(S,\circ\right)$ the
commutative inverse monoid on $X$ with identity $e$ satisfying the relations $x\circ x = y\circ y = x$
and $x\circ y = y$. Note that $a^-=a$, for every $a \in S$.
Consider the trivial weak brace on $S$, namely $a + b = a\circ b$, for all $a,b\in S$. Then, by \cref{th: deform_dualweak}, we have two solutions $r_e=r$ and $r_x$ related to the two idempotents $e$ and $x$, respectively, for which the maps $\sigma^e$ and $\sigma^x$ are explicitly given by
$\sigma^e_a\left(b\right)
    = \lambda_a\left(b\right) + e=\lambda_a(b)
    = a\circ a\circ b$
and $
    \sigma^{x}_a\left(b\right)
    = \lambda_a\left(b\right) + x 
    = -a + a\circ b + x
    = a\circ a\circ b\circ x.
$
If the two solutions $r_{x}$ and $r_{e}$ were equivalent via a bijection $\varphi:S\to S$, then, in particular, we would have that 
$\varphi\left(a\circ a\circ b\circ x\right) = \varphi\left(a\right)\circ \varphi\left(a\right)\circ \varphi\left(b\right)$, for all $a,b\in S$.
Thus, if $a = b = e$ we have that
$\varphi\left(x\right) = \varphi\left(e\right)\circ \varphi\left(e\right)\circ \varphi\left(e\right) = \varphi\left(e\right)$, a contradiction.
\end{ex}

\medskip

Observe that if $z,w\in S$ give rise to two deformed solutions $r_z$ and $r_w$, respectively, and there exists $\varphi\in\Aut\left(S,+,\circ\right)$ such that $\varphi\left(z\right) = w$, then $r_z$ and $r_w$ are trivially equivalent via $\varphi$.  In the special case of a two-sided skew brace, such a map $\varphi$ exists when $z$ and $w$ are in the same conjugacy class, as we show in the next result.

\begin{prop}\label{conjugacy}
    Let $\left(B,+,\circ\right)$ be a two-sided skew brace and $z,w\in B$ belonging to the same conjugacy class in $\left(B, \circ\right)$. Then, the deformed solutions $r_z$ and $r_w$ are equivalent. 
     \begin{proof}
Due to \cref{two-sided}, $r_z$ and $r_w$ are deformed solutions on $B$. By \cite[Proposition 2.3]{Trap22x} and \cite[Lemma 4.1]{Na19}, all the inner automorphisms of $(B, \circ)$ are skew brace automorphisms of $B$. 
By the assumption, there exist $c\in B$ such that $w = c^-\circ z\circ c$, thus $r_z$ and $r_w$ are equivalent via the inner automorphism $\varphi_c$ given by $\varphi_c\left(a\right) = c^-\circ a\circ c$, for any $a\in B$. In particular, $\left(\varphi_c\times \varphi_c\right)r_{z} =  r_{w}\left(\varphi_c \times \varphi_c\right)$.  
     \end{proof}
\end{prop}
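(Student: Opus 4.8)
The plan is to exploit the fact, already cited in the excerpt, that in a two-sided skew brace the inner automorphisms of $(B,\circ)$ are simultaneously skew brace automorphisms, together with the elementary observation made just before the statement: if $\varphi\in\Aut(B,+,\circ)$ carries $z$ to $w$, then $\varphi$ conjugates $r_z$ to $r_w$. So the proof reduces to (i) checking that $r_z$ and $r_w$ are genuinely deformed solutions on all of $B$, and (ii) producing the conjugating automorphism from the hypothesis that $z$ and $w$ lie in the same $\circ$-conjugacy class.

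First I would invoke \cref{two-sided}: since $B$ is two-sided, $\mathcal{D}_r(B)=B$, so every element of $B$ — in particular $z$ and $w$ — lies in the right distributor, and hence by \cref{th: deform_dualweak} both $r_z$ and $r_w$ are solutions on $B$. Next I would record that all inner automorphisms $\varphi_c\colon a\mapsto c^-\circ a\circ c$ of $(B,\circ)$ are in $\Aut(B,+,\circ)$; this is exactly the content of the cited results \cite[Proposition 2.3]{Trap22x} and \cite[Lemma 4.1]{Na19} for two-sided skew braces (equivalently, two-sided skew braces are precisely those for which conjugation respects $+$). Then, using that $z$ and $w$ are conjugate, pick $c\in B$ with $w=c^-\circ z\circ c$, so that $\varphi_c(z)=w$.

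Finally I would verify the intertwining identity $(\varphi_c\times\varphi_c)\,r_z=r_w\,(\varphi_c\times\varphi_c)$. Since $\varphi_c$ is an automorphism of $(B,+,\circ)$, it commutes with every operation appearing in the formula for $r_z$ — namely $+$, $\circ$, the $\circ$-inverse $(-)^-$, and the additive inverse — and it sends the parameter $z$ to $w$. Writing out $r_z(a,b)=(-a\circ z+a\circ b\circ z,\ (-a\circ z+a\circ b\circ z)^-\circ a\circ b)$ and applying $\varphi_c$ componentwise, each term $\varphi_c(-a\circ z+a\circ b\circ z)$ becomes $-\varphi_c(a)\circ w+\varphi_c(a)\circ\varphi_c(b)\circ w$, which is precisely the corresponding term of $r_w(\varphi_c(a),\varphi_c(b))$; the second component transforms the same way. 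Hence $r_z$ and $r_w$ are equivalent via $\varphi_c$. There is no real obstacle here beyond bookkeeping: the only genuinely substantive input is the external fact that inner automorphisms of a two-sided skew brace are skew brace automorphisms, and everything else is the routine "automorphism intertwines the associated solutions" argument specialised to $r_z$. I would keep the written proof to just a few lines, as in the excerpt's own proof, citing \cref{two-sided}, \cref{th: deform_dualweak}, and the two references, and then stating the intertwining.
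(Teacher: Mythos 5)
Your proposal is correct and follows the paper's own proof essentially verbatim: use \cref{two-sided} to get $\mathcal{D}_r(B)=B$ so both maps are solutions, cite the same two references for inner automorphisms being skew brace automorphisms, and conclude that $\varphi_c$ with $\varphi_c(z)=w$ intertwines $r_z$ and $r_w$. The only difference is that you spell out the componentwise verification of the intertwining identity, which the paper leaves implicit.
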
 

Note that \cref{conjugacy} could is also true in the context of dual weak braces whenever the map $\varphi_c$ is bijective.
\medskip

\begin{rem}
The converse of \cref{conjugacy} is not true. To show this, it is enough to consider the trivial brace on the cyclic group $\mathbb{Z}/2\mathbb{Z}$. Then, the solution $r_0$ coincides with the solution $r_1,$ but $0$ and $1$ trivially belong to different conjugacy classes.
\end{rem}

\bigskip

\section{Structural properties of the distributor}
In this section, we focus on the distributor of any dual weak brace $(S,+, \circ)$ and  highlight some properties for the special case of braces.
\medskip

According to \cref{two-sided}, if $B$ is a skew brace, then $\mathcal{D}_r(B)=B$ if and only if $B$ is a two-sided skew brace. The other limit case is when there exists only the trivial deformation, in other words, $\mathcal{D}_r(B) = \{0\}$. We give some examples below.
\begin{exs}\label{exs:ex:dis}\hspace{1mm}
    \begin{enumerate}
        \item\label{exs:ex:dis:eq1} 
        Let $\left(B, +, \circ\right)$ be the brace on $\left(\mathbb{Z}, +\right)$ with $a\circ b = a + \left(-1\right)^ab$, for all $a,b\in \mathbb{Z}$, cf. \cite[Proposition 6]{Ru07-cyc}. Then, $\mathcal{D}_r\left(B\right) = \{0\}$ (it is enough to choose $a,b\in\mathbb{Z}$ both odd). 
        
    
    \item Let $\left(B, +, \circ\right)$ be the brace on $\left(\mathbb{Z}, \circ\right) = \left< \,g\,\right>$ with $g^k + g^l = g^{k +\left(-1\right)^kl}$, for all $k,l\in \mathbb{Z}$. Then, since $B$ is a two-sided skew brace, $\mathcal{D}_r\left(B\right) = B$.
    \end{enumerate}
\end{exs}

\medskip

 The following are examples of skew braces in which $\mathcal{D}_r(B)$ is not trivial.
 \begin{ex}\label{ex_7}
Let $B$ be a two-sided skew brace and $C$ be a skew brace such that $\dr{C}=\{0\}$. Then, $\dr{B\times C} = B\times \{0\}$.
\end{ex}

\begin{ex}\label{exs:6}
Let $n\in\mathbb{N}_0$ and let us denote by $A_n$ the brace with additive group $(\mathbb{Z}/n\mathbb{Z},+)$ and multiplication given by $a\circ b=a+(-1)^ab,$ for all $a,b\in \mathbb{Z}/n\mathbb{Z}.$ Then, it is a routine computation to check that $z\in \dr{A_n}$ if and only if $4z\equiv0\pmod{n}.$ Thus, if $n=0,$ we get $\dr{A_0}=\{0\}$ (cf. \cref{exs:ex:dis}-1.). If $n\geq 1$, 
    \begin{itemize}
\item[-] if $\mathrm{gcd}(4,n)=1,$ then $\dr{A_n}=\{0\}$,
\item[-] if $\mathrm{gcd}(4,n)=2,$ then $\dr{A_n}=\{0,\frac{n}{2}\}$,
\item[-] if $\mathrm{gcd}(4,n)=4,$ then $\dr{A_n}=\{0,\frac{n}{4},\frac{n}{2}, \frac{3n}{4}\}$.
    \end{itemize}
\end{ex}

\smallskip

\medskip

More generally, one can prove the following results related to the distributor of any dual weak brace. Let us first recall that an inverse subsemigroup $I$ of an inverse semigroup $S$ is \emph{full} if $\E(S) \subseteq I$ (see \cite[p. 19]{Law98}).
\begin{prop} \label{prop_dr_dual}
    Let $(S,+, \circ)$ be a dual weak brace. Then, $ \mathcal{D}_r(S)$ is a full inverse subsemigroup of the Clifford semigroup $(S, \circ)$ containing the center $\zeta(S, \circ)$ of $(S, \circ)$. 
   \begin{proof}
   Initially, it holds $E\left(S\right) \subseteq \mathcal{D}_r(S)$. Moreover, if $a, b \in S$ and $z_1, z_2 \in \mathcal{D}_r(S)$, by \eqref{cond_zmeno}, we get
   \begin{align*}
       (a+b) \circ (z_1\circ z_2)&= \left(a \circ z_1+\left(z_1^-+b\right) \circ z_1\right) \circ z_2\\
       &=a \circ z_1 \circ z_2 +\left(z_2^-+\left(z_1^-+b\right) \circ z_1\right) \circ z_2\\
       &=a \circ z_1 \circ z_2 +\left(z_2^-\circ z_1^-\circ z_1+\left(z_1^-+b\right) \circ z_1\right) \circ z_2 &\mbox{by \eqref{pro_dual_idemp}}\\&=a \circ z_1 \circ z_2 + \left(\left(z_1 \circ z_2\right)^-+b\right) \circ z_1 \circ z_2,
   \end{align*}
   namely $z_1 \circ z_2 \in \mathcal{D}_r(S)$. Besides,  by \eqref{pro_dual_idemp},
   \begin{align*}
       (a+b) \circ z_1^-&=\left(a\circ z_1^-\circ z_1+\left(z_1^- -z_1^-+ b\circ z_1^-\right)\circ z_1\right)\circ z_1^-\\
       &\underset{\eqref{cond_zmeno}}{=}\left(a \circ z_1^--z_1^-+b \circ z_1^-\right) \circ z_1 \circ z_1^-\\&=a \circ z_1^--z_1^-+b \circ z_1^-,
   \end{align*}
   i.e., $z_1^- \in \mathcal{D}_r(S)$. Besides, if $z\in \zeta(S, \circ)$, then
$
(a+b)\circ z=z\circ a-z+z\circ b=a\circ z-z+b\circ z,
$
  for all $a,b\in S$, i.e., $z \in \mathcal{D}_r(S)$. Therefore, the claim follows.
   \end{proof}
\end{prop}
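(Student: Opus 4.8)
The plan is to verify the three required closure properties of $\mathcal{D}_r(S)$ — containing the idempotents, being closed under $\circ$, being closed under the $\circ$-inverse — and then the containment of the center, working throughout with the dual-weak-brace identities, especially \eqref{pro_dual_idemp}, \eqref{condition_solu}, and its reformulation \eqref{cond_zmeno} from \cref{le:cond-sol-dual}. These together say precisely that $\mathcal{D}_r(S)$ is a full inverse subsemigroup of $(S,\circ)$: it is a subsemigroup since it is $\circ$-closed, it is \emph{inverse} since $(S,\circ)$ is a Clifford (hence inverse) semigroup and $\mathcal{D}_r(S)$ is closed under taking inverses, and it is \emph{full} by the definition recalled just before the statement, because $\E(S)\subseteq\mathcal{D}_r(S)$.

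For the idempotents, fix $e\in\E(S)$ and use \eqref{pro_dual_idemp} twice: $(a+b)\circ e = e+a+b$ while $a\circ e - e + b\circ e = (e+a) - e + (e+b)$; since idempotents are central in the Clifford semigroup $(S,+)$, both sides collapse to $e+a+b$, so $e\in\mathcal{D}_r(S)$. For closure under $\circ$, take $z_1,z_2\in\mathcal{D}_r(S)$ and compute $(a+b)\circ(z_1\circ z_2)$ by applying the $\mathcal{D}_r$-condition for $z_1$ first and then the form \eqref{cond_zmeno} for $z_2$; the only subtlety is to rewrite $z_2^-$ as $z_2^-\circ z_1^-\circ z_1$ using that $z_1^-\circ z_1\in\E(S)$ is absorbed via \eqref{pro_dual_idemp}, which lets one refactor the middle term as $\bigl((z_1\circ z_2)^- + b\bigr)\circ z_1\circ z_2$, yielding exactly the \eqref{cond_zmeno}-form for $z_1\circ z_2$. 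For closure under inverse, given $z\in\mathcal{D}_r(S)$ one inserts the idempotent $z^-\circ z$ in front of $a$ and rearranges $b$ as $(z^- - z^- + b\circ z^-)\circ z$, again using \eqref{pro_dual_idemp}, so that \eqref{cond_zmeno} for $z$ can be applied and then stripped of the trailing $z\circ z^-$; this gives the $\mathcal{D}_r$-condition for $z^-$. Finally, for $z\in\zeta(S,\circ)$ one simply moves $z$ past $a$ and $b$: $(a+b)\circ z = z\circ(a+b) = z\circ a - z + z\circ b = a\circ z - z + b\circ z$, where the middle equality is the left-distributivity of the weak brace written through $\lambda_z$ together with \eqref{pro_dual_idemp}, or more directly $z\circ(a+b)=z\circ a - z + z\circ b$ is the defining weak-brace identity; hence $\zeta(S,\circ)\subseteq\mathcal{D}_r(S)$.

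None of these steps is deep; the only genuine care needed is the bookkeeping of idempotents — remembering that in a Clifford semigroup idempotents are central for \emph{both} operations and that \eqref{pro_dual_idemp} lets an idempotent be rewritten interchangeably as a $+$-summand or a $\circ$-factor. The main obstacle, such as it is, is organizing the $\circ$-closure and inverse-closure computations so that the auxiliary idempotents $z_i^-\circ z_i$ are introduced and cancelled at the right moments; once one commits to using the form \eqref{cond_zmeno} rather than \eqref{condition_solu} directly, both computations become short and essentially parallel. I would therefore present them in that order: $\E(S)\subseteq\mathcal{D}_r(S)$, then $\circ$-closure via \eqref{cond_zmeno}, then inverse-closure via \eqref{cond_zmeno}, then $\zeta(S,\circ)\subseteq\mathcal{D}_r(S)$, and conclude by invoking the "full inverse subsemigroup" terminology recalled before the statement.
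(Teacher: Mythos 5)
Your proposal is correct and follows essentially the same route as the paper's proof: idempotents via \eqref{pro_dual_idemp}, closure under $\circ$ and under inversion via \eqref{cond_zmeno} with the auxiliary idempotents $z_i^-\circ z_i$ inserted and absorbed exactly as in the paper, and the center via left distributivity combined with centrality of $z$. The only difference is presentational — you describe the computations rather than writing them out — but the steps you outline are precisely those the paper carries out.
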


\medskip

Clearly, in the case of a skew brace $B$, $\mathcal{D}_r(B)$ is a subgroup of $(B, \circ)$ containing the center $\zeta(B, \circ)$ of the group $(B, \circ)$.

\medskip

In general, $\mathcal{D}_r(S)$ is not an inverse subsemigroup of the additive semigroup, unless we get into particular cases. 
\begin{prop}
Let $(S,+, \circ)$ be a dual weak brace in which the Clifford semigroup $(S,+)$ is commutative, then $\mathcal{D}_r(S)$ is a two-sided dual weak subbrace of $S$.
\begin{proof}
By \cref{prop_dr_dual}, it is enough to show that $\mathcal{D}_r(S)$ is an inverse subsemigroup of $(S,+)$. Clearly, $\E(S,+) \subseteq \mathcal{D}_r(S)$ because $\E(S, \circ)=\E(S,+)$. Moreover, if $x,y\in \mathcal{D}_r(S)$ and $a,b\in S$, we have
$$
\begin{aligned}
(a+b)\circ (x+y)&=a\circ x-x+b\circ x-b-a+a\circ y-y+b\circ y\\ &=a\circ x-a+a\circ y-(x+y)+b\circ x-b+b\circ y\\ &=a\circ(x+y)-(x+y)+b\circ (x+y),
\end{aligned}
$$
and thus $x+y\in \mathcal{D}_r(S)$. Now, by \cref{lemma_dual_weak}-\ref{lem:5}, 
$$
\begin{aligned}
(a+b)\circ (-x)&=a+b-(a+b)\circ x+a+b=a+b-a\circ x+x-b\circ x+a+b\\ &=a-a\circ x+a +x+b-b\circ x+ b=a\circ (-x)-(-x)+b\circ (-x),
\end{aligned}
$$
and so $-x\in \mathcal{D}_r(S)$. Therefore, the claim follows.
\end{proof}
\end{prop}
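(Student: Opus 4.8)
The plan is to lean on \cref{prop_dr_dual}, which already records that $\mathcal{D}_r(S)$ is a full inverse subsemigroup of the Clifford semigroup $(S,\circ)$, so it is closed under $\circ$ and under the multiplicative inverse $^-$ and contains $\E(S)$. Consequently, to obtain that $\mathcal{D}_r(S)$ is a dual weak subbrace it is enough to show it is an inverse subsemigroup of $(S,+)$ as well: once $\mathcal{D}_r(S)$ is closed under $+$, $-$, $\circ$, $^-$, the identities $a\circ(b+c)=a\circ b-a+a\circ c$ and $a\circ a^-=-a+a$ are simply inherited from $S$, $\E(S,+)=\E(S,\circ)\subseteq\mathcal{D}_r(S)$, and $(S,+)$ being a commutative Clifford semigroup passes to the substructure. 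Moreover, the word ``two-sided'' costs nothing here: for $x,y,z\in\mathcal{D}_r(S)$ we have in particular $z\in\mathcal{D}_r(S)$ and $x,y\in S$, so by the very definition of the right distributor $(x+y)\circ z=x\circ z-z+y\circ z$. Thus the whole statement reduces to closure of $\mathcal{D}_r(S)$ under $+$ and under $-$.

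For closure under $+$, I would fix $x,y\in\mathcal{D}_r(S)$ and arbitrary $a,b\in S$ and expand $(a+b)\circ(x+y)$ via left distributivity, then replace $(a+b)\circ x$ by $a\circ x-x+b\circ x$ and $(a+b)\circ y$ by $a\circ y-y+b\circ y$ (legitimate since $x,y\in\mathcal{D}_r(S)$), using $-(a+b)=-b-a$. This presents $(a+b)\circ(x+y)$ as a sum of the elements $a\circ x,\,-x,\,b\circ x,\,-b,\,-a,\,a\circ y,\,-y,\,b\circ y$. On the other side, expanding the target expression $a\circ(x+y)-(x+y)+b\circ(x+y)$ by left distributivity together with $-(x+y)=-y-x$ yields a sum of exactly the same list of elements in a different order. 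Commutativity of $(S,+)$ then equates the two, giving $x+y\in\mathcal{D}_r(S)$.

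For closure under $-$, I would fix $x\in\mathcal{D}_r(S)$ and $a,b\in S$ and use \cref{lemma_dual_weak}-\ref{lem:5}, namely $u\circ(-v)=u-u\circ v+u$. Applying it to $(a+b)\circ(-x)$ and then substituting $(a+b)\circ x=a\circ x-x+b\circ x$ writes $(a+b)\circ(-x)$ as a sum of $a,\,b,\,-b\circ x,\,x,\,-a\circ x,\,a,\,b$; applying the same lemma to each of $a\circ(-x)$ and $b\circ(-x)$ and using $-(-x)=x$ writes the target $a\circ(-x)-(-x)+b\circ(-x)$ as a sum of the same collection of elements, merely reordered. Commutativity of $(S,+)$ again closes the gap, so $-x\in\mathcal{D}_r(S)$, and the proof is complete.

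The only genuinely delicate point is bookkeeping: each step above rearranges fairly long alternating sums of elements and their additive inverses, and the equalities hold \emph{precisely} because $+$ is commutative. This is exactly where the hypothesis on $(S,+)$ enters, and without it the two sides would in general differ (the right distributor need not even be additively closed, as noted just before the statement). Everything else — the ``two-sided'' property and the verification that $\mathcal{D}_r(S)$ is itself a dual weak brace — is then immediate from \cref{prop_dr_dual} and inheritance of the axioms.
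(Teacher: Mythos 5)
Your proposal is correct and follows essentially the same route as the paper's proof: reduce via \cref{prop_dr_dual} to checking that $\mathcal{D}_r(S)$ is an inverse subsemigroup of $(S,+)$, then verify closure under $+$ and under $-$ by expanding both sides of the distributor identity (using left distributivity, the hypothesis on $x,y$, and \cref{lemma_dual_weak}-\ref{lem:5}) and matching the resulting sums term by term through commutativity of $(S,+)$. The computations you outline are exactly those carried out in the paper, and your explicit remark that two-sidedness is automatic from the definition of the right distributor is a point the paper leaves implicit.
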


\medskip


\begin{rem}\label{cor_brace}
    As a consequence, if $B$ is a brace, its right distributor is a two-sided subbrace of $B$.
\end{rem}

\begin{rem}
    Observe that if $(S,+, \circ)$ is a dual weak brace and $\lambda_z\left(\mathcal{D}_r(S)\right)\subseteq \mathcal{D}_r(S)$, for every $z \in \mathcal{D}_r(S)$,  then $\mathcal{D}_r(S)$ is an inverse subsemigroup of the additive Clifford semigroup $(S,+)$.
    Indeed, it is enough to observe that if $z,w\in \mathcal{D}_r(S)$, then  $z+w = z \circ \lambda_{z^-}\left(w\right)$ and that $-z=\lambda_{z}\left(z^-\right)$.    
\end{rem}
\medskip
As is usual in ring theory, in any brace $\left(B, +, \circ\right)$ we can define the binary operation $a\cdot b := -a + a\circ b - b$, for all $a,b\in B$, cf. \cite{CeSmVe19}. In particular, by \cref{cor_brace}, it follows  that $\left(\mathcal{D}_r(B), +, \cdot\right)$ is a radical ring contained in $B$. Moreover, observe that
 \begin{align}\label{puntino}
     \forall \, a, b \in B \quad z \in \mathcal{D}_r(B) \iff (a+b)\cdot z=a \cdot z+b \cdot z.
 \end{align}

 The following result describes all the parameters giving rise to a deformed solution in a left brace.
 The proof is essentially obtained by extracting the key equalities contained in the proof of \cite[Theorem 1.1]{Lau19}.
\begin{theor}
Let $\left(B,+,\circ\right)$ be a brace. Then, it holds that
\begin{align*}
    \mathcal{D}_r(B) = \{z\in B\ |\ \forall\, a,b\in B\quad (a\cdot b)\cdot z = a\cdot (b\cdot z) \}.
\end{align*}
\begin{proof}
If $z\in \mathcal{D}_r(B)$, the claim is proven by describing $\circ$ in terms of $\cdot$ in the associativity condition of the $\circ$. For the other inclusion, by making explicit the equality $(a\cdot b)\cdot z=a\cdot (b\cdot z)$ and multiplying both sides by $a^-$, we get
$$
(b+a^-\circ(-b))\circ z=b\circ z-z+2\left(a^-\circ z\right)-a^-\circ b\circ z,
$$
for all $a,b \in B$. Now, since by the proof of \cite[Proposition 3.1 (ii)]{Lau19}, $(-x)\circ z =2z-x\circ z,$ for all $z,x\in B$ such that $(x\cdot(-x))\cdot z=x\cdot((-x)\cdot z),$ we get that, for all $a, b \in B$,
$2\left(a^- \circ z\right)-a^-\circ b \circ z= a^- \circ (2z-b \circ z)=a^- \circ (-b) \circ z,$
and so
$$
\left(b+a^-\circ(-b)\right)\circ z=b\circ z-z+\left(a^-\circ(-b)\right)\circ z,
$$
i.e. $z\in \dr B.$
\end{proof}
 \end{theor}   


\medskip

In light of the previous proposition, if $B$ is a brace, we automatically obtain examples of right modules on the ring $\dr B$ having $B$ as underlying set. Besides, when $\dr B$ is not trivial, we also get non-trivial instances of $R$-module braces, since $\lambda_{a}\left(B\right)\subseteq\Aut_{\dr B}\left(B\right)$, for every $a\in B$ (see \cite[Definition 2]{Del23}).

\begin{cor}
Any brace $\left(B,+,\circ\right)$ is a 
$\dr B$-module brace.
\end{cor}
\medskip

Now, it becomes natural to wonder if the distributor is an ideal. We recall that a subset $I$ of a skew brace $B$ is a \emph{left ideal} if it is both a normal subgroup of $(B, +)$ and $\lambda_a(I) \subseteq I$, for every $a \in B$. Moreover, a left ideal $I$ of $B$ is an \emph{ideal} of $B$ if it is a normal subgroup of $(B, \circ)$.  
Equivalently, according to \cite[Lemmas 1.8--1.9]{CeSmVe19}, $I$ is a left ideal if and only if $ B\cdot I \subseteq I$ and it is an ideal of $B$ if and only if also
$I\cdot B \subseteq I.$ 

Recalling that a non-trivial brace is right nilpotent  of index $3$ if $B^{2}\cdot B = \{0\}$ and left nilpotent of index $3$ if $B\cdot B^{2} = \{0\}$,
by \eqref{puntino} we have the following result.
\begin{prop}
    Let $B$ be a brace. If $B$ is right nilpotent of index $3$, then $\dr{B}$ is a left ideal of $B$. If $B$ is left nilpotent of index $3$, then $z\cdot b\in\dr{B}$, for all $b\in B$ and $z\in \dr{B}$.
\end{prop}
\begin{ex}
    If we consider the brace $A_6$ as in \cref{exs:6}, it is easy to check that $A_6$ is right nilpotent of index $3$, hence $\dr{A_6}$ is a left ideal of $A_6$.
\end{ex}

In the following, we characterize, in general, when the distributor is an ideal for the braces $A_n$.
\begin{prop}\label{prop:class}
Let $n \in \mathbb{N}_0$ and $A_n$ be the brace defined in \cref{exs:6}. Then,  $\dr {A_n}$ is a left ideal. Moreover, $\dr {A_n}$ is an ideal of $A_n$ if and only if $n\in\{2,4\}$, $\mathrm{gcd}(4,n)=1$, or $8\mid n.$
\begin{proof}
The first statement holds by observing that $k\cdot d = \left(\left(-1\right)^k-1\right)d\in \dr{A_n}$, for all $k\in A_n$ and $d\in \dr{A_n}$.\\ 
For all $k\in A_n$ and $d\in \dr {A_n}$, we have:
\begin{equation}\label{cases:class}
d\cdot k=\left(\left(-1\right)^d-1\right)k=\begin{cases}0\  &\text{if}\  2\mid d\\ -2k\  &\text{if}\  2\nmid d\end{cases}   \,. 
\end{equation}
Assume that $\dr {A_n}$ is an ideal of $A_n$ and let us break down our consideration to the following cases.
\begin{itemize}
    \item[-] If $\mathrm{gcd}(n,4)=1,$ then $\dr{A_n}=\{0\}$ and $n$ can be any number coprime with $4.$ 
    \item[-] If $\mathrm{gcd}(n,4)=2,$ then $\dr{A_n}=\{0,\frac{n}{2}\}$ and $\frac{n}{2}\cdot k =0 \pmod n$ or $\frac{n}{2}\cdot k =\frac{n}{2}\pmod n.$ Since $\mathrm{gcd}(n,4)=2,$ then $2\nmid \frac{n}{2}$ and so, by \eqref{cases:class}, since $\dr {A_n}$ is an ideal, we have that $-2k\equiv\frac{n}{2}\pmod n$ or $-2k\equiv0\pmod n.$ The first congruence leads to a contradiction with $\mathrm{gcd}(n,4)=2,$ while the second gives $n=2.$
    \item[-] If $\mathrm{gcd}(n,4)=4,$ then $\dr{A_n}=\{0, \frac{n}{4}, \frac{n}{2}, \frac{3n}{4}\}$ and we can consider two cases, i.e., when $2\mid d$ for all $d\in \dr{A_n}$ or when there exists $d\in \dr{A_n}$ such that $2\nmid d.$ In the first case, we obtain that $8\mid n,$ since $2\mid \frac{n}{4}$. In the second one, we get $d \cdot k=-2k \equiv \frac{ni}{4}$, with $i \in \{0,1,2,3\}$
    and for all $k\in A_n$. If $i=1,3$ we get a contradiction with $2\nmid d$.  Besides, the congruence with $i=0$ is satisfied if and only if $n=2,$ but $\mathrm{gcd}(n,4)=4.$ The third congruence for $i=2$ and $k=1$ implies $n=4.$
\end{itemize}
In the opposite direction, in the cases $n=2,4$ or $\mathrm{gcd}(4,n)=1,$ the distributor is a trivial ideal. The claim in the case when $8\mid n$ follows directly from \eqref{cases:class}.
\end{proof}
\end{prop}
\medskip

If $(B, +, \circ)$ is a skew brace, the  group $\mathcal{D}_r(B)$ can be related to $\Fix(B)$ and to its annihilator  $\Ann\left(B\right)$. 
According to \cite{CeSmVe19}, 
$\Fix\left(B\right) = \{a\in B \ | \ \forall \ x\in B\quad \lambda_x\left(a\right) =  a \}$
and it is a left ideal of $B$.
Besides, the \emph{annihilator} of $B$ is an ideal of $B$ defined by 
$\Ann\left(B\right)= \Soc\left(B\right)\cap \zeta\left(B, \circ \right)$, where $\Soc\left(B\right) =\{a \in B \, \mid \, \forall \ b\in B\quad a + b = a\circ b \} \cap \zeta(S, +)$, see  \cite{CCoSt19}. 
It is a routine computation to check the following inclusion.

\begin{prop}
    Let $(B,+, \circ)$ be a skew brace. Then, $\Ann(B) \subseteq \Fix(B) \subseteq \mathcal{D}_r(B)$.
\end{prop}

\bigskip

\section{Bites of parameters}
In this section, regarding a dual weak brace $S$ as a strong semilattice of skew braces $B_\alpha$, we analyze how the entire distributor of $S$ interacts with the distributor of each $B_\alpha$. In addition, we show when a deformed solution on $S$ is the strong semilattice of deformed solutions on $B_\alpha$.

\medskip

Hereinafter, through the section, $S$ will be seen as a strong semilattice $[Y, B_\alpha, \phi_{\alpha, \beta}]$. First, in the following, we show when  $\mathcal{D}_r(S)$ is the disjoint union of each $\mathcal{D}_r(B_\alpha)$.

\begin{theor}\label{prop_dr(S)}
Let $S$ be a dual weak brace, then $\mathcal{D}_r(S)\subseteq\mathop{\dot{\bigcup}}\limits_{\alpha\in Y} \mathcal{D}_r(B_\alpha)$.\\ 
Moreover, $\mathcal{D}_r(S)=\mathop{\dot{\bigcup}}\limits_{\alpha\in Y} \mathcal{D}_r(B_\alpha)$ if and only if $\phi_{\alpha,\beta}\left(\mathcal{D}_r(B_\alpha)\right)\subseteq \mathcal{D}_r(B_\beta)$, for all $\alpha,\beta\in Y$ such that $\beta\leq\alpha$.
\end{theor}
\begin{proof}
If $z\in \mathcal{D}_r(S),$ then there exists $\alpha\in Y$ such that $z\in \mathcal{D}_r(B_\alpha)$ and thus $\mathcal{D}_r(S)\subseteq\mathop{\dot{\bigcup}}\limits_{\alpha\in Y} \mathcal{D}_r(B_\alpha).$\\
Let us assume that $z\in\mathop{\dot{\bigcup}}\limits_{\alpha\in Y}\mathcal{D}_r(B_\alpha)$ and that $\phi_{\alpha,\beta}(\mathcal{D}_r(B_\alpha))\subseteq \mathcal{D}_r(B_\beta),$ for all $\alpha,\beta\in Y$ such that $\beta \leq \alpha$. Then, there exists $\gamma\in Y$ such that $z\in \mathcal{D}_r(B_\gamma)$, and, for all $a\in B_\alpha$ and $b\in B_\beta$, we have
$$
\begin{aligned}
(a+b)\circ z&=(\phi_{\alpha,\alpha\beta\gamma}(a)+\phi_{\beta,\alpha\beta\gamma}(b))\circ \phi_{\gamma,\alpha\beta\gamma}(z)\\ &=\phi_{\alpha,\alpha\beta\gamma}(a)\circ \phi_{\gamma,\alpha\beta\gamma}(z)- \phi_{\gamma,\alpha\beta\gamma}(z)+\phi_{\beta,\alpha\beta\gamma}(b)\circ \phi_{\gamma,\alpha\beta\gamma}(z)\\
&=a\circ z-z+b\circ z,
\end{aligned}
$$
since $\phi_{\gamma,\alpha\beta\gamma}\left(\mathcal{D}_r(B_\gamma))\subseteq \mathcal{D}_r(B_{\alpha\beta\gamma}\right).$ Hence, 
$z\in \mathcal{D}_r(S).$\\
Conversely, if $\mathcal{D}_r(S) = \mathop{\dot{\bigcup}}\limits_{\alpha\in Y} \mathcal{D}_r(B_\alpha),$ then for all $\alpha,\beta\in Y,$ $a,b\in B_\beta$ and $z\in \mathcal{D}_r(B_\alpha)$ such that $\beta\leq\alpha,$ we get that
$$(a+b)\circ \phi_{\alpha,\beta}(z)=(a+b)\circ z=a\circ z-z+b\circ z=a\circ \phi_{\alpha,\beta}(z)-\phi_{\alpha,\beta}(z)+b\circ \phi_{\alpha,\beta}(z),
$$
since $z\in \mathcal{D}_r(S).$ Thus, 
$\phi_{\alpha,\beta}(\mathcal{D}_r(B_\alpha))\subseteq \mathcal{D}_r(B_\beta).$
\end{proof}

\medskip

In the following example, $\mathcal{D}_r(S)$ is not the union of distributors.
  \begin{ex}
        Let $Y=\{\alpha, \beta\}$, with $\beta < \alpha$. Considering the cyclic group $C_6:=\left(\mathbb{Z}/6\mathbb{Z},+\right)$, let $B_\alpha$ be the trivial brace on $C_6$, $B_\beta$ the brace $A_6$ given in \cref{exs:6}, and $\varphi:B_\alpha\to B_\beta$ the brace homomorphism given by $\varphi\left(a\right) = 2a$, for all $a\in B_\alpha$. Then, $S=[Y, B_\gamma, \varphi]$ is a dual weak brace. Moreover, $\mathcal{D}_r\left(B_\alpha\right)=B_\alpha$,  $\mathcal{D}_r\left(B_\beta\right)=\{0_\beta, 3_\beta\}$, and $\varphi\left(1_\alpha\right) = 2_\beta\notin \mathcal{D}_r\left(B_\beta\right)$, hence trivially $\varphi\left(\mathcal{D}_r\left(B_\alpha\right)\right) \nsubseteq\mathcal{D}_r\left(B_{\beta}\right)$. Indeed, in this case, $\mathcal{D}_r\left(S\right)=\{\,0_\alpha, 3_\alpha, 0_\beta, 3_\beta\,\}$.
    \end{ex}

\medskip

Note that in the example above $\mathcal{D}_r\left(S\right)$ is not an ideal of $S$ although $\mathcal{D}_r\left(B_\gamma\right)$ is an ideal of each skew brace $B_\gamma$.  We highlight that the notion of ideal has also been given for dual weak braces in \cite{CaMaSt24} and makes use of the definitions of normal subsemigroups of Clifford semigroups. Moreover,  \cite[Theorem 3]{CaMaSt24} is a structure theorem for ideals of a dual weak brace $S=[Y, B_\alpha, \phi_{\alpha, \beta}]$ in terms of the ideals of the skew braces $B_\alpha$. Indeed, as a direct consequence of \cref{prop_dr(S)} and \cite[Theorem 3.2]{CaMaSt24}, we obtain the following result. 
\begin{cor}
    Let $S$ be a dual weak brace. If $\mathcal{D}_r(B_\alpha)$ is an ideal of each skew brace $B_\alpha$, for every $\alpha \in Y$, then $\mathcal{D}_r(S)$ is an ideal of $S$ if and only if $\mathcal{D}_r(S)=\mathop{\dot{\bigcup}}\limits_{\alpha\in Y} \mathcal{D}_r(B_\alpha)$. 
\end{cor}

\medskip


In this part, we compare deformed solutions on a dual weak brace 
$S$ acquired in \cref{th: deform_dualweak} with solutions constructed as a strong semilattice of deformed solutions on skew braces $B_\alpha$. Although in \cref{prop_dr(S)} we characterize when $\mathcal{D}_r(S)=\mathop{\dot{\bigcup}}\limits_{\alpha\in Y} \mathcal{D}_r(B_\alpha)$, it is not guaranteed that, in this case, $r_z$ is a strong semilattice of some deformed solutions on $B_\alpha$, for some $z \in \dr{S}$, since $3.$ of \cref{thm:4.1} is not satisfied, in general. We will show that it is true only for some parameters and if the semilattice is bounded.

\medskip

\begin{defin}\label{def:ssp}
Let $S$ be a dual weak brace. A subset $P\subseteq S$ of $S$ is said to be a  \emph{bite of parameters} if the following hold:
\begin{enumerate}
\item $P\cap B_\alpha=\{p_{\alpha}\}\subseteq \mathcal{D}_r(B_\alpha)$, for every $\alpha\in Y$,
\item 
$a\circ p_{\beta} - a\circ p_\alpha = p_{\beta} - p_\alpha$,
for all $\alpha,\beta\in Y$ such that $\beta \leq \alpha$ and $a \in B_\alpha$.
\end{enumerate}
We will denote the family of all bites of parameters of $S$ by $\mathcal{B}(S)$.
\end{defin}

\begin{lemma}\label{lemma_param}
Let $S$ be a dual weak brace and $P\subseteq{S}$ such that $P\cap B_\alpha=\{p_{\alpha}\}\subseteq \mathcal{D}_r(B_\alpha)$, for every $\alpha\in Y$. If $\phi_{\alpha,\beta}\left(p_\alpha\right)\in P,$ for all $p_\alpha\in P$, $\alpha,\beta\in Y$ such that $\beta \leq \alpha$, then $P\in\mathcal{B}(S)$.
\end{lemma}

\noindent The converse of \cref{lemma_param} is not true, as we show in the following example.
\begin{ex}
    Let $S$ be the dual weak braces given by $Y=\{\alpha,\beta\}$ with $\beta < \alpha$, $B_\alpha=\mathbb{Z}$, $B_\beta=\mathbb{Z}/n\mathbb{Z}$ trivial braces, and $\phi_{\alpha, \beta}:\mathbb{Z}\to \mathbb{Z}/n\mathbb{Z}$  the canonical epimorphism. Thus, $P=\{0_\alpha,2_\beta\}\in \mathcal{B}(S),$ but $\phi_{\alpha, \beta}\left(0_\alpha\right)=0_\beta\not\in P.$
\end{ex}

\begin{rem}
Observe that for any dual weak brace $S$, we have that $\mathcal{B}(S)$ is non-empty. Indeed, the set $\E(S)=\{0_\alpha\ |\ \alpha\in Y\}$ is a bite of parameters, as every homomorphism $\phi_{\alpha, \alpha\beta}$ preserves the identity of each skew brace.
\end{rem}

\begin{ex}
Let $S=[Y, U_n, \phi_{n, m}]$ be the dual weak brace in \cref{ex_u}, $M$ the maximum in the set $Y$, and $a\in U_M$ a fixed element. Then, the set $\{\phi_{M,m}(a)\ |\ m\in Y\}\in \mathcal{B}(S)$.
\end{ex}

\smallskip

In the following, to avoid the overloading notation, by $r_{p_\alpha}$ we mean the deformed solution on the skew brace $B_\alpha$ by a parameter $p_\alpha \in \mathcal{D}_r\left(B_\alpha\right)$.
\begin{lemma}\label{lemma_solu}
Let $S$ be a dual weak brace and $r_{p_\alpha}$ a deformed solution on $B_\alpha$, for every $\alpha \in Y$. Then, the map $t:S\times S\to S\times S$ defined by
$$
t(x,y):=r_{p_{\alpha\beta}}(\phi_{\alpha,\alpha\beta}(x),\phi_{\beta,\alpha\beta}(y)), 
$$
for all $x\in B_\alpha$, $y\in B_\beta$, is a strong semilattice of the solutions $r_{p_\alpha}$ if and only if $\{p_\alpha\ |\ \alpha\in Y\}\in \mathcal{B}(S).$ 
\end{lemma}
\begin{proof}
Let us assume that $\{p_\alpha\ |\ \alpha\in Y\}\in \mathcal{B}(S).$ Then, to get the claim, we have to check $3.$ of \cref{thm:4.1}, i.e., 
\begin{equation}\label{eq1Lem50}
\phi_{\alpha,\beta}(a)\circ p_\beta-\phi_{\alpha,\beta}(a\circ p_\alpha)=\phi_{\alpha,\beta}(a\circ b)\circ p_\beta-\phi_{\alpha,\beta}(a\circ b\circ p_\alpha),
\end{equation}
for all $\alpha, \beta \in Y$ such that $\beta \leq \alpha$ and for all $a,b\in B_\alpha$.
Using $2.$ of \cref{def:ssp}, we get that
$$
\begin{aligned}
\phi_{\alpha,\beta}(a)\circ p_\beta-\phi_{\alpha,\beta}(a\circ p_\alpha)&= p_\beta- \phi_{\alpha,\beta}(p_\alpha)= \phi_{\alpha,\beta}(a \circ b)\circ p_\beta-\phi_{\alpha,\beta}(a\circ b)\circ \phi_{\alpha,\beta}(p_\alpha)\\ &=\phi_{\alpha,\beta}(a\circ b)\circ p_\beta-\phi_{\alpha,\beta}(a\circ b\circ p_\alpha).
\end{aligned}
$$
Other way, let us assume that $r_{p_{\alpha\beta}}$ is the strong semilattice of the solutions $r_{p_\alpha}$ and consider $P=\{p_\alpha\ |\ \alpha\in Y\}.$ Clearly, $P\cap B_\alpha=\{p_{\alpha}\}\subseteq \mathcal{D}_r(B_\alpha)$, for every $\alpha\in Y.$ By the previous part, if $a \in B_\alpha$,
by taking $b=a^-$ in \eqref{eq1Lem50}, we obtain property 2. of \cref{def:ssp}.
\end{proof}

\medskip

In the following result, we will denote by $1$ the join of the semilattice $Y$, whenever it is bounded.

\begin{theor}
Let $S$ be a dual weak brace and $z\in \mathcal{D}_r(S).$ 
\begin{enumerate}
\item If $Y$ is not bounded or $z\not\in B_1$, then the deformed solution $r_z$ is not a strong semilattice of  the solutions defined on $B_{\alpha}$, for every $\alpha \in Y$.
\item  If $Y$ is bounded and $r_z$ is a strong semilattice of solutions, then $z\in B_1$ and there exists $P=\{\phi_{1, \alpha}(z) \, \mid \, \alpha \in Y\}\in \mathcal{B}(S)$ such that $r_z$ is the strong semilattice of solutions $r_{\phi_{1, \alpha}(z)}$ on each skew brace $B_\alpha$, for every $\alpha \in Y$.
\end{enumerate}
\begin{proof} \hspace{1mm}
\begin{enumerate}
\item 
Let $z\in B_{\alpha}$ for some $\alpha\in Y.$ Observe that if $Y$ is not bounded or $\alpha\not=1,$ then there exists $\beta\in Y$ such that $\beta\alpha\not=\beta.$ In that case, for all $a,b\in B_\beta$,
$
r_z(a,b)\subseteq B_{\beta\alpha}\times B_{\beta\alpha}\not= B_\beta\times B_\beta,$ and thus $r_{z\mid B_\beta \times B_\beta}$ is not a well-defined solution on $B_\beta.$ Consequently, $r_z$ is not a strong semilattice of solutions on $B_\alpha$, for every $\alpha \in Y$.

\item Clearly, $z\in B_1$ by the previous point. Let us consider $P=\{\phi_{1,\alpha}(z)\ |\ \alpha\in Y\}.$ Then, one can easily check that $P\cap B_\alpha=\{\phi_{1,\alpha}(z)\} \subseteq \mathcal{D}_r\left(B_\alpha\right)$, for every $\alpha \in Y$. Moreover, if $\alpha,\beta\in Y$ are such that $\beta\leq \alpha,$ then $\phi_{\alpha,\beta}\phi_{1,\alpha}(z)=\phi_{1,\beta}(z)\in P.$ Thus, by \cref{lemma_param}, $P\in\mathcal{B}(S).$ Furthermore, if $\alpha,\beta\in Y$ and $a\in B_\alpha$, $b\in B_\beta$, 
\begin{align*}
    r_z(a,b)=r_{\phi_{1, \alpha \beta}(z)}\left(\phi_{\alpha, \alpha\beta}(a), \phi_{\alpha, \alpha \beta}(b) \right).
\end{align*}
Finally, by \cref{lemma_solu}, we get the claim.
\end{enumerate}
\end{proof}
\end{theor}

\begin{cor}
Let $S$ be a dual weak brace, $P\in \mathcal{B}(S)$, and $z\in P$. Then, $r_z$ is the strong semilattice of deformed solutions on $B_\alpha$ through parameters in $P$ if and only if $Y$ is bounded and $z\in B_1.$ 
\end{cor}


\bigskip

\section*{Acknowledgements}
\noindent The authors thank an anonymous referee for addressing their attention to papers \cite{BeDr82} and \cite{Ku74}.
They also thank Ivan Kaygorodov for his kindness in guiding them to find some literature.

\noindent This work was partially supported by the Dipartimento di Matematica e Fisica ``Ennio De Giorgi'' - Università del Salento.  The first author was supported by the “HaMMon (Hazard Mapping and vulnerability
Monitoring) - Spoke 2” project. The second author was supported by  EPSRC grant EP/V008129/1. The first and the third authors are members of GNSAGA (INdAM) and of the non-profit association ADV-AGTA.

\bigskip 
\bibliography{bibliography}

\bigskip

\section*{Email addresses}
\begin{itemize}
    \item[]  Marzia Mazzotta\\
    Dipartimento di Matematica e Fisica ``Ennio De Giorgi''
 		\\
		Universit\`{a} del Salento, Via Provinciale Lecce-Arnesano, 
 		73100 Lecce (Italy)\\
   \texttt{marzia.mazzotta@unisalento.it}\\
   ORCID: 0000-0001-6179-9862
\item[] Bernard Rybo\l owicz\\
Department of Mathematics, Heriot-Watt University, 
 		Edinburgh EH14 4AS, and Maxwell Institute for Mathematical Sciences, Edinburgh (UK)\\
    \texttt{B.Rybolowicz@hw.ac.uk} \\
    ORCID: 0000-0002-2894-8288
       \item[] Paola Stefanelli\\
  Dipartimento di Matematica e Fisica ``Ennio De Giorgi''\\
		Universit\`{a} del Salento, Via Provinciale Lecce-Arnesano, 
 		73100 Lecce (Italy) \\
   \texttt{paola.stefanelli@unisalento.it}
   \\
   ORCID: 0000-0003-3899-3151
\end{itemize}

\end{document}